 \newcommand{\grad}{\nabla}
\newcommand{\rn}{\mathbb{R}^{N}}
\newcommand{\R}{\mathbb{R}}
\newcommand{\zn}{\mathbb{Z}^{N}}
	\newcommand{\Z}{\mathbb{Z}}
\newcommand{\A}{\alpha}
\newcommand{\B}{\beta}
\newcommand{\del}{\partial}
\newcommand{\LL}{\mathcal{L}}
\newcommand{\bj}{\mathbf{j}}
\newcommand{\bi}{\mathbf{i}}
\def\N{\mathbb{N}}
\newtheorem{thm}{Theorem}[section]
\newtheorem{lem}[thm]{Lemma}
\newtheorem{prop}[thm]{Proposition}
\theoremstyle{definition}
\newtheorem{rem}[thm]{Remark}
\numberwithin{equation}{section}
\newcommand{\cm}{\color{black}}
\newcommand{\cb}{\color{black}}
\newcommand{\nc}{\normalcolor}
\DeclareRobustCommand{\SkipTocEntry}[5]{}
\begin{document}

\title[Powers of discrete Laplacian for HJB equations]
{Discretization of fractional fully nonlinear equations by powers of discrete Laplacians}

\author{Indranil Chowdhury}
\address{Indian Institute of Technology - Kanpur, India}
\curraddr{}
\email{indranil@iitk.ac.in}

\author{Espen R. Jakobsen}
\address{Norwegian University of Science and Technology, Norway}
\curraddr{}
\email{espen.jakobsen@ntnu.no}

\author{Robin Ø. Lien}
\address{Norwegian University of Science and Technology, Norway}
\curraddr{}
\email{robin.o.lien@ntnu.no}

\subjclass[2020]{49L25, 35J60, 34K37, 35R11, 35J70, 45K05, 49L25, 49M25, 93E20, 65N06, \cb 65M15, \nc 65R20, 65N12}



\keywords{Fractional and nonlocal equations, fully nonlinear equation, HJB equations, \cb Isaacs equations, \nc degenerate equation, stochastic control, L\'evy processes, convergence, \cb error bound, \nc viscosity solution, numerical method, monotone scheme, powers of discrete Laplacians. }

\date{}

\dedicatory{}

\begin{abstract}
We study discretizations of fractional fully nonlinear equations by powers of discrete Laplacians. Our problems are parabolic and of order $\sigma\in(0,2)$ since they involve fractional Laplace operators $(-\Delta)^{\sigma/2}$. They arise e.g.~in control and game theory as dynamic programming equations \cb -- HJB and Isaacs equation -- \nc and solutions are non-smooth in general and should be interpreted as viscosity solutions. 
Our approximations are realized as finite-difference quadrature approximations and are 2nd order accurate for all values of $\sigma$. The accuracy of \cb previous approximations of fractional fully nonlinear equations \nc
depend on $\sigma$ and are worse when $\sigma$ is close to $2$.
We show that the schemes are monotone, consistent, $L^\infty$-stable, and convergent using a priori estimates, viscosity solutions theory, and the method of half-relaxed limits. We \cb also prove a second order error bound for smooth solutions and \nc present \cb many \nc numerical examples.
\end{abstract}

\maketitle


\section{Introduction}
\label{sec::intr}
In this paper we introduce and analyze numerical schemes based on powers of the discrete Laplacian in the context of nonlocal fully nonlinear equations. 
Our equations involve fractional Laplacians, pseudo-differential operators that can be defined equivalently as Fourier multipliers, singular integral operators, or  powers of the Laplacian \cite{Ap:Book,Kw}: For $\sigma\in(0,2)$,
\begin{align}
-(-\Delta)^{\frac\sigma2}\phi(x)&=\mathcal{F}^{-1}\big[-|\xi|^{\sigma}\hat\phi(\xi)\big](x)\nonumber\\
&=\lim_{r\to 0}\int_{|z|>r} \Big( \phi(x+ z) - \phi(x) \Big) \, \frac{c}{|z|^{N+\sigma}}dz\nonumber\\ 
&=
 \frac{1}{|\Gamma(-\frac{\sigma}{2})|} \, \int_0^{\infty} \Big( e^{t\Delta} \phi(x) - \phi(x) \Big) \, \frac{dt}{t^{1+ \frac{\sigma}{2}}},
 \label{fraclap}
\end{align} 
\cb where $c=\frac{2^{\sigma}\Gamma(\frac{N+\sigma}{2})}{\pi^{N/2}|\Gamma(-\frac{\sigma}{2})|}.$ \nc We discretize this operator by powers of the discrete Laplacian, \begin{align}\label{dlap}\Delta_h\phi(x)=\sum_{k=1}^N\frac{\phi(x+he_k)-2\phi(x)+\phi(x-he_k)}{h^2},
 \end{align}
denoted by $-(-\Delta_h)^{\frac\sigma2}$ and obtained from  \eqref{fraclap} by replacing $\Delta$ by $\Delta_h$  \cite{ciaurri2015fractional}, cf. \eqref{discrete_fraclap}.

The equations we consider are fully-nonlinear, possibly (strongly) degenerate equations from optimal control and differential game theory, equations with a large number of applications in engineering, science,  economics, etc. \cite{Be:book,FS:book,OS:book,Ha:book}:
\begin{align}\label{eq:Isaacs}
    u_t -F(x,t, Du,-(-\Delta)^{\frac\sigma2}u)= 0\mbox{\quad in} \quad \rn\times(0,T]=:Q_T,
\end{align}
or more generally, Hamilton-Jacobi-Bellmann(HJB)/Isaacs equations
\begin{align}\label{eqn:main}
 &  u_t- \inf_{\B\in\mathcal{B}}\sup_{\A\in\mathcal{A}} \left\{ \LL^{\A,\B}u-c^{\A,\B}(x,t) u+ f^{\A,\B} (x,t)\right\}  = 0  \mbox{\quad in} \quad Q_T, 
 \end{align}
where $\mathcal{A}$, $\mathcal B$ are compact sets and $\LL^{\A,\B}$ is the $\sigma\wedge 1$ order  drift-diffusion operator
\begin{align*} 
\LL^{\A,\B}\phi(x) := -a^{\A,\B}(x,t)(-\Delta)^{\frac\sigma2}\phi(x) + b^{\A,\B}(x,t)\cdot D\phi(x).
\end{align*}
Equation \eqref{eqn:main} is the dynamic programming equation for a finite horizon optimal stochastic differential game \cite{Bi,OS:book,Ha:book}, see Section \ref{sec:DG} for the details.

Equation  \eqref{eq:Isaacs} can be degenerate parabolic as we allow $F$ to be non-decreasing in last variable. The solutions are not smooth in general. Even for nonconvex uniformly parabolic problems, the solutions could be too irregular for the equation to hold pointwise.
The correct notion of (weak) solution for this type of problems is viscosity solutions \cite{JK05,JK06,BI08}. Wellposedness, regularity,  and other properties of viscosity solutions for nonlocal fully nonlinear PDEs has been intensely studied in recent years. Regularity in the degenerate case comes from comparison type of arguments and typically gives preservation of the regularity of the data \cite{JK06}. Solutions are therefore often no more than Lipschitz continuous, see Section \ref{sec:ex1} for an example.

There is an extensive literature on numerical methods for local fully nonlinear equations including finite differences, semi-Lagrangian, finite elements, spectral, Monte Carlo, 
and many more, see e.g. \cite{Crandall-lions,Souga, DK:book,FF:book,BS91, Camilli-Falcone, Le00, BH:2001,BZ03,Deb_Jak:13,SS:2013,BT2004,EHJ2017}.
Here there is the added difficulty of
discretizing the fractional and nonlocal operators in a monotone, stable, and
consistent way. These operators are singular integral operators, and
can be discretized by quadrature after truncating the singular part and correcting with a
suitable second derivative term \cite{Cont-2005}. In the setting of nonlocal Bellman-Isaacs equations, such approximations were introduced in
\cite{JKL08,C-Jakobsen2009,BJK1}
with further developments, including error estimates, in e.g.
\cite{BCJ1, Coc-Risebro16, Reisinger2021, Roxana-Reisinger2021, BJK2,BJK1,JKL08}.  
These approximations have fractional order accuracy, depending on the order of the fractional/nonlocal operator.


The numerical approximations 
used in this paper are based on powers of discrete Laplacians. As opposed to the approximations above, they are 2nd order accurate regardless of the order of the operators. They can also be interpreted as quadrature rules and represented as an infinite series expansion with explicit weights \cite{Ciaurri-Stinga-02}. These weights satisfy a discrete version of the L\'evy integrability condition. 
Previously powers of discrete Laplacians have been used to discretize linear equations \cite{CDG20}, porous medium equations \cite{EJT18b}, and very recently also certain HJB equations \cite{CJ23}. \cb We also refer to \cite{DJ24} for results on 1d quasilinear equations related to integrated porous medium equations with fractional pressure. \nc In \cite{CJ23}, (optimal fractional) error bounds for numerical schemes for convex fractional equations are studied. But in the case of powers of the discrete Laplacian, only very simple non-degenerate constant coefficient problems were considered, and no numerical experiments were performed. This paper gives extensions of the schemes and convergence results in \cite{CJ23} to a very large class of fully nonlinear equations, including non-convex, strongly degenerate, and variable coefficients problems. We show that the resulting schemes are consistent, monotone, stable, and convergent. We do not study error bounds, but we perform a number of numerical experiments.

To simplify the presentation we introduce the numerical scheme and perform the detailed convergence analysis for the following version of the problem:  
\begin{equation}
\begin{aligned}
    u_t -F\big(-(-\Delta)^{\frac{\sigma}{2}}u\big)&=f(x,t),  \hspace{5mm}&(x,t)&\in Q_T, \\
    u(x,0)&= u_0(x), &x&\in\mathbb{R}^N.
\end{aligned}    
\label{eq::simple_parabolic_eq}
\end{equation}
We also focus on explicit schemes using forward Euler time discretizations. Under suitable CFL conditions, we then show that the schemes are monotone satisfying a comparison principle and $L^\infty$-stable. We use viscosity solutions and the Barles-Perthame-Souganidis method of half-relaxed limits \cite{BS91} to show that solutions of the schemes converge uniformly to solutions of the equation. To do that, we show that the particular versions of monotonicity and consistency required by \cite{BS91} are indeed satified by our schemes. Numerical examples are presented for problem \eqref{eq::simple_parabolic_eq}, in one and two dimension, for problems with non-smooth and smooth solutions. We also illustrate numerically the convergence of solutions as $\sigma\to0^+$ and $\sigma\to2^-$, showing that our schemes are stable also with respect to these limits. 
Later (Section \ref{sec::extn}) we explain how to extend the schemes and results to other time-discretisations and more general problems, including problems with first order/convection terms, and the Bellman-Isaacs equation \eqref{eq:Isaacs}.  


\medskip
 The remaining part of this paper is organized as follows: In Section \ref{sec:assump_wellposed} we introduce the notation and assumptions, and give  well-posedness results for equation \eqref{eq::simple_parabolic_eq}. In addition, we discuss the relation between HJB-Isaacs equations and a zero sum game. In Section \ref{sec:fraclap} we give the \cb analytical \nc results for approximations based on powers of discrete Laplacians \cb -- showing monotonicity, stability, consistency, and  convergence of the schemes. We also prove an error bound showing that the scheme is 2nd order in $h$ when solutions are smooth enough. \nc Numerical examples are presented in Section \ref{sec::num_exp}, and Section \ref{sec::extn} covers extensions of the results for various cases -- other time discretizations, equations involving convection \cb and more general diffusion terms, and \nc HJB-Isaacs type equations. 


\section{On 
nonlocal PDEs} 
\label{sec:assump_wellposed} 

In this section we present the  assumptions on the nonlocal fully nonlinear equation \eqref{eq::simple_parabolic_eq} and give wellposedness and regularity results. In the second part we explain the connection between the HJB/Isaacs equation \eqref{eqn:main} and a stochastic differential game.

Let us first introduce some notation. By $C,K$ etc. we mean various constants which may change from line to line. \cb We let $(\cdot)^+ := \max\{0,\cdot\}$, $|\cdot|$ denote the euclidean norm, and also define the norms $\|u\|_{L^\infty}= \sup _{x} |u(x)|$ and $\|u\|_{W^{1,\infty}} = \|u\|_{L^\infty} + \sup_{x\neq y} \frac{|u(x)-u(y)|}{|x-y|}$. \nc 
\cm Moreover, $C^{n}(Q)$ ($C_b^n(Q)$) for $n \in \N$ denotes the space of $n$ times continuously differentiable functions on $Q$ (with bounded derivatives). \nc 

\subsection{Wellposedness of nonlocal PDEs} We will study viscosity solutions of equation \eqref{eq::simple_parabolic_eq} under the following assumptions:
 
\makeatletter
\newcommand{\myitem}[1]{%
\item[#1]\protected@edef\@currentlabel{#1}%
}
\makeatother
\begin{enumerate}
\medskip\myitem{$\mathbf{(A1)}$}\label{F1} $F(l_1)-F(l_2) \leq L_F(l_1-l_2)^+$ for all $l_1, l_2 \in \R$.
  \bigskip  
\myitem{$\mathbf{(A2)}$}\label{F3} $f\in C_b(Q_T)$ and $u_0\in C_b(\R^N)$.
\end{enumerate}
 \medskip

Assumption \ref{F1} implies that $F$ is both Lipschitz continous (with Lipschitz constant $L_F$) and nondecreasing.
A definition and general theory of viscosity solution for the nonlocal equations like \eqref{eqn:main} can be found e.g. in \cite{JK05,BI08}, but we do not need this generality here. In particular since there is no local diffusion, we could follow the simpler (comparison) arguments of \cite{CJ17}. 

We have the following strong comparison and well-posedness  results for \eqref{eq::simple_parabolic_eq}:
\begin{prop} \label{result:welposed-visco}
    Assume \ref{F1} and \ref{F3}.
    \medskip
    
    \noindent (i) (Comparison) If $u\in \text{USC}_b$ and $v\in\text{LSC}_b$ are bounded viscosity subsolution and supersolution of \eqref{eq::simple_parabolic_eq} respectively,
    then 
    $$u \leq v \text{ in } \mathbb{R}^N \times [0,T].$$
        
    \noindent (ii) (Existence and uniqueness) There exists a unique bounded continuous viscosity solution $u$ of \eqref{eq::simple_parabolic_eq}.
    \medskip
    
    \noindent (iii) ($L^\infty$-stability) The solution $u$ in (ii) satisfies: $$\|u(\cdot, t)\|_{L^{\infty}} \leq \|u_0\|_{L^{\infty}}+t\big(\|f\|_{L^{\infty}} + |F(0)|\big).$$

    \label{prop::comparison_existence_stability}
\end{prop}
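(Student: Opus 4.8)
The plan is to establish the comparison principle (i) first, and then derive both the $L^\infty$-bound (iii) and the existence part of (ii) from it. For (i), since $F$ has no explicit dependence on $u$ and there is no local diffusion, I would adapt the comparison argument of \cite{CJ17}. Suppose toward a contradiction that $M:=\sup_{Q_T}(u-v)>0$. First I would perturb the subsolution to a strict one by setting $u_\eta:=u-\frac{\eta}{T-t}$; because $\frac{\eta}{T-t}$ is spatially constant it is annihilated by $-(-\Delta)^{\frac\sigma2}$, so $u_\eta$ satisfies the subsolution inequality with a strict margin $-\frac{\eta}{(T-t)^2}$, and $u_\eta\to-\infty$ as $t\to T$ keeps the maximum away from the terminal time. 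It then suffices to show $u_\eta\le v$ for every $\eta>0$ and let $\eta\to0$.

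Next I would double variables, maximizing $\Phi(x,y,t)=u_\eta(x,t)-v(y,t)-\frac{|x-y|^2}{2\varepsilon}$ over $Q_T\times\R^N$, after adding a small spatial localization (a $\gamma$-multiple of a weight with controlled fractional Laplacian, as in \cite{CJ17}) to guarantee the supremum is attained at an interior point $(x_0,y_0,t_0)$ with $t_0>0$; standard penalization estimates give $\frac{|x_0-y_0|^2}{\varepsilon}\to0$ and $|x_0-y_0|\to0$ as $\varepsilon\to0$, while the localization keeps $x_0,y_0$ in a fixed bounded set. The heart of the proof is the evaluation of the two nonlocal terms. Using the equivalent definition of viscosity sub/supersolutions in which $-(-\Delta)^{\frac\sigma2}$ is computed with the test function on a ball $B_\rho$ and with the function itself outside, the quadratic penalization contributes only $\tfrac{|z|^2}{\varepsilon}$ on $B_\rho$, giving a near-field bound of order $\rho^{2-\sigma}/\varepsilon$, whereas the ordering $u_\eta(x_0+z)-v(y_0+z)\le u_\eta(x_0)-v(y_0)$ at the maximizer forces the far-field difference to be nonpositive. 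Writing $A:=-(-\Delta)^{\frac\sigma2}u_\eta(x_0)$ and $B:=-(-\Delta)^{\frac\sigma2}v(y_0)$ for the resulting values, this yields $A-B\le C\rho^{2-\sigma}/\varepsilon+o_\gamma(1)$, and letting $\rho\to0$ gives $A-B\le o_\gamma(1)$.

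Subtracting the two viscosity inequalities at $(x_0,y_0,t_0)$ — the common time-test contributions cancel since $t_0$ is an interior maximizer in $t$ — leaves $F(A)-F(B)+\big(f(x_0,t_0)-f(y_0,t_0)\big)\ge \frac{\eta}{(T-t_0)^2}\ge\frac{\eta}{T^2}>0$. Now assumption \ref{F1} enters decisively: it gives $F(A)-F(B)\le L_F(A-B)^+$, which tends to $0$ by the nonlocal estimate, while $f(x_0,t_0)-f(y_0,t_0)\to0$ by uniform continuity of $f$ on the bounded region (using \ref{F3} and $|x_0-y_0|\to0$). Sending $\rho\to0$, $\varepsilon\to0$, $\gamma\to0$ contradicts the strictly positive lower bound, so $M\le0$ and (i) follows.

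For (iii) I would use (i) with explicit spatially-constant barriers. Set $w(x,t):=\|u_0\|_{L^\infty}+t\big(\|f\|_{L^\infty}+|F(0)|\big)$; since $-(-\Delta)^{\frac\sigma2}w\equiv0$, one checks $w_t-F\big(-(-\Delta)^{\frac\sigma2}w\big)-f=\|f\|_{L^\infty}+|F(0)|-F(0)-f\ge0$, so $w$ is a supersolution with $w(\cdot,0)\ge u_0$, and comparison gives $u\le w$; the mirror barrier $-w$ is a subsolution, giving $-w\le u$, which is exactly the stated bound. For (ii), uniqueness is immediate from (i) applied in both directions, and for existence I would run Perron's method: the barriers above (suitably pinched at $t=0$ to the continuous data $u_0$) provide an ordered sub/supersolution pair, the supremum of all subsolutions below the supersolution is a viscosity solution by the usual stability and bump arguments, and the barriers force continuous attainment of the initial data. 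The main obstacle throughout is the nonlocal term in (i): controlling the singular integral at the doubled maximum and, on the unbounded domain $\R^N$, choosing a localization whose fractional Laplacian remains bounded — both handled by the techniques of \cite{CJ17}.
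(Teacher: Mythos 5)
Your proposal is correct and takes essentially the same route as the paper: for parts (i)--(ii) the paper simply cites \cite{CJ17}, and what you sketch (the doubling-of-variables comparison with strict perturbation $u-\tfrac{\eta}{T-t}$, near/far splitting of the nonlocal term, a localization at infinity, and Perron's method for existence) is precisely that cited machinery, including the crucial point -- emphasized in the paper's footnote -- that one undoes the doubling \emph{before} removing the penalization at infinity so that $C_b$ data (uniformly continuous only on compacts) suffices. Your proof of (iii), via the spatially constant barriers $\pm\big(\|u_0\|_{L^\infty}+t(|F(0)|+\|f\|_{L^\infty})\big)$ and comparison, is identical to the paper's.
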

\begin{proof}
    We refer to \cite[Section 6]{CJ17} for 
    part (i) and (ii) in the case when $u_0$ and $f$ are $BUC$. As mentioned there, the parabolic proof is a simple generalization of the detailed proofs in the elliptic case in \cite[Theorem 2.1, 2.3 and Corollary 2.2]{CJ17}. These proofs easily extend to the case when $u_0$ and $f$ are $C_b$.\footnote{To do this we need to modify the viscosity solution doubling of variables argument in the following way: First pass the limit to undo the doubling and then to undo the penalisation of infinity. The proof still works, because as long as we penalise infinity, we are working on a compact set, and $C_b$ implies $BUC$ here. $UC$ is only needed to undo the doubling (the first limit). This order works for HJB/Isaacs type of equations, but in \cite{CJ17} the order of the limits needs to be opposite because quasi-linear operators are considered.}
    \newline\hspace*{1em}For (iii), begin by defining
    \begin{equation*}
w(x,t)=\|u_0\|_{L^{\infty}(\rn)}+t(|F(0)|+\|f\|_{L^{\infty}(Q_T)}).
    \end{equation*}
    Inserting $w$ into  the left-hand side of \eqref{eq::simple_parabolic_eq}, we find that it is a supersolution of the equation:
    \begin{align*}
        w_t-F(-(-\Delta)^{\frac{\sigma}{2}}w) &= (|F(0)|+\|f\|_{L^{\infty}})-F(0)  \geq f,
    \end{align*}
    where we get $F(0)$ in the first equality since $w$ is independent of $x$. Similarly, $-w$ is a subsolution of \eqref{eq::simple_parabolic_eq}. Then the result follows by part (i). 
\end{proof}

 



\subsection{A differential game related to nonlocal PDEs}\label{sec:DG}
The HJB/Isaacs equation \eqref{eqn:main} is related to a zero sum differential game where players control the following SDE \cite{Ap:Book,CT:Book} driven by a $\sigma$-stable Levy process of the form  
\begin{align}\label{SDE}dX_s = b^{\alpha_s,\B_s}(X_s,s)\,ds+\int_{|z|>0} \eta^{\alpha_s,\B_s}(X_{s^-},s)z \,\tilde N(dz,ds), \quad X_t=x,
\end{align}
when $\eta^{\A,\B}(x,t)=[a^{\A,\B}(x,t)]^{\frac1{\sigma}}$ and $\tilde N$ is the compensated Poisson random measure. 

The  Poisson random measure $N(B,t)$ counts the number of jumps $z\in B$ of the driving process up to time $t$ \cite{Ap:Book,OS:book}. For $\sigma$-stable processes, $\mathbb E\big[ N(dz,dt)\big]=\nu(dz)dt$ with $\nu(dz)=\frac{c\, dz}{|z|^{N+\sigma}}$, \cb where $c$ is as in \eqref{fraclap}, \nc and then the compensated measure $\tilde N(dx,dt)=N(dz,dt)-\nu(dz)dt$.
Note that by self-similarity of the definition of the fractional Laplacian and the definition of $\eta^{\A,\B}$,
$$-a^{\A,\B}(x,t)(-\Delta)^{\frac\sigma2}\phi(x)=p.v.\int_{|z|>0} \big( \phi(x+\eta^{\A,\B}(x,t)z) - \phi(x) \big) \, \frac c{|z|^{N+\sigma}}dz,$$
and hence the generator of $X_s$ in \eqref{SDE} is the operator $\LL^{\A,\B}$ in \eqref{eqn:main} \cite{Ap:Book,CT:Book}.

The game setting is a zero-sum game with two players, 
separate controls $\alpha_\cdot$ and $\B_\cdot$ belonging to  sets of admissible controls $\mathcal A_{\text{ad}}$ and $\mathcal B_{\text{ad}}$, and a ``cost" function
\begin{align*}
J(x,t,\alpha_\cdot,\B_\cdot)&=E\Big[\int_t^T e^{-\int_t^s c^{\alpha_r,\beta_r}(X_r,r)dr}f^{\alpha_s,\B_s}(X_s,s)\,ds \\ 
& \hspace{4cm}+e^{-\int_t^T c^{\alpha_r,\beta_r}(X_r,r)dr}u_0(X_T)\Big], 
\end{align*}
where $c$, $f$, and $u_0$ are the discounting rate, running cost, and terminal cost respectively. The ``cost" function is a cost for one player who seeks to minimise it and a gain for the other who seeks to maximise it. The game can be understood from the (upper/lower) values of the game  defined as
$$u(x,T-t)=\inf_{\alpha_\cdot\in \mathcal A_{\text{ad}}}\sup_{\B_\cdot\in \mathcal B_{\text{ad}}} J(x,t,\alpha_\cdot,\B_\cdot).$$
In the dynamic programming approach to optimal control and differential games \cite{Bi,OS:book,Ha:book}, this function is shown to satisfy the HJB/Isaacs equation \eqref{eqn:main} with initial data $u_0$.

\section{Discretization by powers of discrete Laplacian} \label{sec:fraclap}

In this section we approximate the nonlocal fully nonlinear HJB/Isaacs equation \eqref{eq::simple_parabolic_eq} using a forward Euler approximation in time and powers of the discrete Laplacian to approximate the fractional Laplacians. We then show that the resulting scheme is  consistent, monotone, and $L^\infty$-stable. Using the method of half-relaxed limits of Barles-Perthame-Souganidis \cite{BS91}, we then show convergence of the method toward the viscosity solution of \eqref{eq::simple_parabolic_eq}.

 We introduce space and time grids, $\mathcal{G}_h = h \mathbb{Z}^N=\{x_{\bj} = h\bj : \bj \in \mathbb{Z}^N\}$ and $\mathcal{T}_\tau^T = \{t_n := n\tau\}_{n=0}^M$, for $\tau = \frac{T}{M}>0$, $M\in \mathbb{N}$. The parameters $h$ and $\tau$ are then the distance between the grid points in the two grids.

\subsection{Powers of the discrete Laplacian}

Let $\Delta_h\phi(x)$ be the discrete Laplacian, the 2nd order central finite difference approximation of $\Delta\phi$ defined in \eqref{dlap}. Then the powers of the discrete Laplacian \cite{Ciaurri-Stinga-02,  EJT18b} is defined as 
\begin{align}\label{discrete_fraclap}
-(- \Delta_h)^{\frac{\sigma}{2}} \phi(x) := \frac{1}{|\Gamma(-\frac{\sigma}{2})|} \, \int_0^{\infty} \Big( e^{t\Delta_h} \phi(x) - \phi(x) \Big) \, \frac{dt}{t^{1+ \frac{\sigma}{2}}}, \quad \sigma\in(0,2),
\end{align}  
where $U(t)=e^{t\Delta_h} \psi $ is the solution of semi-discrete heat equation 
\begin{equation}
    \begin{aligned}
\partial_t U(x,t) & = \Delta_h \,  U(x, t) \quad \mbox{for} \quad (x,t) \in \rn \times (0,\infty), \\
 U(x,0) & = \psi(x) \quad \mbox{for} \quad x \in \rn. 
    \end{aligned}  
    \label{eq:semi_disc_heat_eq}
\end{equation}
An explicit formula for $e^{t\Delta_h} \phi$ and details related to this approximation can be found in Section 4.5 of \cite{EJT18b}. 

The operator $-(- \Delta_h)^{\frac{\sigma}{2}}$ is a monotone (positive coefficients) operator given by a series expansion with explicit weights, and these weights satisfy a discrete version of the Levy integrability condition.

\begin{lem}\label{lem::discr_fraclap_quadrature}
Let $-(- \Delta_h)^{\frac{\sigma}{2}}$ be defined by \eqref{discrete_fraclap}. Then
\begin{align*}
-(- \Delta_h)^{\frac{\sigma}{2}} \phi(x) = \sum_{\bj \in \zn \setminus \{0\}} \Big( \phi(x+ x_{\bj}) - \phi(x)\Big) \kappa_{\sigma, h,\bj},
\end{align*}
where 
$$\kappa_{\sigma, h, \bj} = \frac{1}{h^{\sigma}}\frac{1}{|\Gamma(-\frac{\sigma}{2})|}\int_0^{\infty}G(\bj,t)\frac{\text{d}t}{t^{1+\frac{\sigma}{2}}},$$
$G(\bj,t) = e^{-2Nt}\prod_{i=1}^N I_{|\bj_i|}(2t)$, and $I_m\geq0$ is the modified Bessel function of the first kind and order $m \in \mathbb{N}$. 
Moreover, $\kappa_{\sigma, h, \bj}\geq0 $ for all $\bj\in \Z^N$ and there is a  $C_{\sigma}>0$ such that
$$
 \sum_{\bj \in \mathbb{Z}^N\backslash \{0\}}\kappa_{\sigma, h, \bj} = \frac{C_{\sigma}}{h^{\sigma}}.
 $$
\end{lem}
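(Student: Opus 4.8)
My plan is to reduce the integral definition \eqref{discrete_fraclap} to the claimed series by inserting the explicit semi-discrete heat kernel and then interchanging the sum and the time integral. The starting point is the explicit formula for the semigroup from \cite[Section 4.5]{EJT18b}, namely $e^{t\Delta_h}\phi(x)=\sum_{\bj\in\zn}G(\bj,t/h^2)\,\phi(x+x_{\bj})$ with $G(\bj,s)=e^{-2Ns}\prod_{i=1}^N I_{|\bj_i|}(2s)$. From this representation I would record the two structural facts that drive everything: (i) $G(\bj,s)\ge0$, since $I_m\ge0$ on $[0,\infty)$ (all Taylor coefficients of $I_m$ are nonnegative); and (ii) $G$ conserves mass, $\sum_{\bj\in\zn}G(\bj,s)=1$. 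Fact (ii) follows from the product structure together with the one–dimensional identity $\sum_{j\in\Z}e^{-2s}I_{|j|}(2s)=e^{-2s}\sum_{m\in\Z}I_m(2s)=1$, using the generating-function identity $\sum_{m\in\Z}I_m(z)=e^{z}$ and $I_{-m}=I_m$. (Alternatively, $G(\cdot,t/h^2)$ is the transition kernel of the continuous-time random walk generated by $\Delta_h$, which makes (i)–(ii) transparent.)

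The technical heart of the argument, which I expect to be the main obstacle, is proving the discrete Lévy integrability, i.e.\ that $\int_0^\infty\big(1-G(0,t)\big)\,t^{-1-\sigma/2}\,dt<\infty$, where by (ii) $1-G(0,t)=\sum_{\bj\neq0}G(\bj,t)$ and $G(0,t)=e^{-2Nt}I_0(2t)^N$. This requires the asymptotics of $I_0$ at both ends. Near $t=0$, from $I_0(2t)=1+t^2+O(t^4)$ and $e^{-2Nt}=1-2Nt+O(t^2)$ one gets $1-G(0,t)=2Nt+O(t^2)$, so the integrand behaves like $t^{-\sigma/2}$, which is integrable at $0$ precisely because $\sigma<2$. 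Near $t=\infty$, the expansion $I_0(2t)\sim e^{2t}(4\pi t)^{-1/2}$ gives $G(0,t)\sim(4\pi t)^{-N/2}\to0$, so $1-G(0,t)\to1$ and the integrand behaves like $t^{-1-\sigma/2}$, integrable at infinity precisely because $\sigma>0$. Thus the full range $\sigma\in(0,2)$ is used, and I would set $C_\sigma:=\tfrac{1}{|\Gamma(-\sigma/2)|}\int_0^\infty(1-G(0,t))\,t^{-1-\sigma/2}\,dt$, which is finite, positive (since $0<G(0,t)<1$ for $t>0$), and independent of $h$.

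With integrability in hand the remaining steps are bookkeeping. Using (ii) I would rewrite the integrand of \eqref{discrete_fraclap} as $e^{t\Delta_h}\phi(x)-\phi(x)=\sum_{\bj\neq0}G(\bj,t/h^2)\big(\phi(x+x_{\bj})-\phi(x)\big)$, where the $\bj=0$ term drops out automatically. For bounded $\phi$ the bound $|\phi(x+x_{\bj})-\phi(x)|\le2\|\phi\|_{L^\infty}$ together with the estimate just proved shows $\int_0^\infty\sum_{\bj\neq0}G(\bj,t/h^2)\,|\phi(x+x_{\bj})-\phi(x)|\,t^{-1-\sigma/2}\,dt\le 2\|\phi\|_{L^\infty}\,h^\sigma|\Gamma(-\sigma/2)|\,C_\sigma<\infty$, which justifies applying Fubini to interchange the sum and the integral. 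After the substitution $s=t/h^2$, which produces the factor $h^{-\sigma}$ by the scaling $\tfrac{dt}{t^{1+\sigma/2}}=h^{-\sigma}\tfrac{ds}{s^{1+\sigma/2}}$, one reads off exactly $\kappa_{\sigma,h,\bj}=\tfrac{1}{h^\sigma}\tfrac{1}{|\Gamma(-\sigma/2)|}\int_0^\infty G(\bj,s)\,s^{-1-\sigma/2}\,ds$ and the claimed series representation. Nonnegativity $\kappa_{\sigma,h,\bj}\ge0$ is then immediate from $G\ge0$, and summing over $\bj\neq0$ and interchanging once more by Tonelli (nonnegative terms) gives $\sum_{\bj\neq0}\kappa_{\sigma,h,\bj}=\tfrac{1}{h^\sigma}\tfrac{1}{|\Gamma(-\sigma/2)|}\int_0^\infty(1-G(0,s))\,s^{-1-\sigma/2}\,ds=C_\sigma/h^\sigma$, completing the proof.
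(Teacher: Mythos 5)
Your proposal is correct, and its core coincides with the paper's own proof: both reduce the discrete L\'evy condition to the finiteness of $\int_0^\infty (1-G(0,t))\,t^{-1-\sigma/2}\,dt$ via a Tonelli interchange, and both split the analysis into small and large times. The differences are in the supporting ingredients and are worth noting. Where the paper simply cites \cite[Lemma 4.20]{EJT18b} for the quadrature representation and \cite[Section 8.2]{ciaurri2015fractional} for the mass conservation $\sum_{\bj}G(\bj,t)=1$, you prove both from scratch: the representation from the explicit semigroup kernel plus a Fubini justification, and mass conservation from the generating-function identity $\sum_{m\in\Z}I_m(z)=e^z$ together with $I_{-m}=I_m$ and the product structure of $G$; this makes your argument self-contained, at the price of invoking Bessel-function identities. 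For the small-time behavior, you expand $I_0(2t)=1+t^2+O(t^4)$ to get $1-G(0,t)=2Nt+O(t^2)$, whereas the paper argues more softly that $G(0,\cdot)$ is $C^1$ near $0$ (power series with infinite radius of convergence) and applies the mean value theorem; both yield the same $O(t)$ bound and hence integrability for $\sigma<2$. At infinity the paper only uses $0\le 1-G(0,t)\le 1$, which is all that is needed; your asymptotic $G(0,t)\sim(4\pi t)^{-N/2}$ is extra information. One small slip: in your Fubini domination bound the factor should be $h^{-\sigma}\,|\Gamma(-\tfrac\sigma2)|\,C_\sigma$, not $h^{\sigma}|\Gamma(-\tfrac\sigma2)|\,C_\sigma$ (the substitution $s=t/h^2$ produces $h^{-\sigma}$, exactly as you state elsewhere); this is a typo and does not affect finiteness, which is all the interchange requires.
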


Note that the last part of the lemma (the formula for the sum) seems not to have been proved before, even though something like this is needed in \cite{EJT18b}.

\begin{proof}
    In the one dimensional case, the results follow from \cite[Theorem 1.1]{Ciaurri-Stinga-02}. For the proof of the quadrature representation, see \cite[Lemma 4.20]{EJT18b}.  

Note that $G\geq0$ and then also $\kappa_{\sigma, h,\bj}\geq0$. For the final result we use from \cite[Section 8.2.]{ciaurri2015fractional} that $\sum_{\bj \in \mathbb{Z}^N}G(\bj,t)=1$ to see that
    \begin{equation}
    \begin{aligned}
            \sum_{\bj \in \mathbb{Z}^N \backslash \{0\}}\kappa_{\sigma, h, \bj} & =\frac{1}{h^{\sigma}}\frac{1}{|\Gamma(-\frac{\sigma}{2})|}\sum_{\bj \in \mathbb{Z}^N\backslash \{0\}} \int_0^{\infty}  G(\bj,t)\frac{\text{d}t}{t^{1+\frac{\sigma}{2}}} \\ &= \frac{1}{h^{\sigma}}\frac{1}{|\Gamma(-\frac{\sigma}{2})|}\int_0^{\infty}  (1-G(0,t))\frac{\text{d}t}{t^{1+\frac{\sigma}{2}}}, \\
    \end{aligned}
    \label{eq::sum_of_weights_calculation}
    \end{equation}
    where we interchange the sum and integral by Tonelli's theorem.   
    
We must show that the integral in the above expression converges. To this end, write
    \begin{align*}
        \int_0^{\infty}  (1-G(0,t))\frac{\text{d}t}{t^{1+\frac{\sigma}{2}}} = \underbrace{\int_0^{1}  (1-G(0,t))\frac{\text{d}t}{t^{1+\frac{\sigma}{2}}}}_{:=I_1} + \underbrace{\int_1^{\infty}  (1-G(0,t))\frac{\text{d}t}{t^{1+\frac{\sigma}{2}}}}_{:=I_2}.
    \end{align*}
    $I_2$ clearly converges, since  
    \begin{align*}
        I_2 \leq \int_1^{\infty}\frac{1}{t^{1+\frac{\sigma}{2}}}dt < \infty,
    \end{align*}
    for $\sigma \in (0,2)$. 
    
Showing that $I_1$ converges requires a bit more work. Using the definitions of $G$ and of modified Bessel functions of the first kind we can write
    \begin{align*}
        G(0,t) = e^{-2Nt}\Big(1+\sum_{m=1}^{\infty}\frac{t^{2m}}{(m!)^{2}} \Big)^N.
    \end{align*}
    The infinite sum in the above expression is a power series with infinite radius of convergence. Consequently, $G(0,t)$ is a smooth function on the whole real line. In particular, $G(0,\cdot)\in C^1([0,1])$. By the mean value theorem, we then have that
    \begin{align*}
        \frac{G(0,t)-1}{t} = G'(s)\qquad\text{for some}\qquad s\in(0,t),
    \end{align*}
    where we use that $G(0,0)=1$. Let $M = \max_{s \in [0,1]} |G'(s)|$ (which exists since $G(0,\cdot)\in C^1([0,1])$). Consequently,
    \begin{align*}
        I_1 \leq \int_{0}^{1}\frac{M}{t^{\frac{\sigma}{2}}} dt < \infty,
    \end{align*}
    for $\sigma \in (0,2)$.

Since $I_1$ and $I_2$ converges, we have shown that 
    \begin{align*}
        \sum_{\bj \in \mathbb{Z}^N \backslash \{0\}}\kappa_{\sigma, h, \bj} = C_{\sigma}\frac{1}{h^{\sigma}},
    \end{align*}
    for some constant $C_{\sigma}>0$ and $\sigma \in (0,2)$. The constant $C_{\sigma}$ is \textit{strictly} greater than zero since the integrand in \eqref{eq::sum_of_weights_calculation} is positive almost everywhere on the domain of integration. 
\end{proof}

By Lemma 4.22 in \cite{EJT18b}, $-(- \Delta_h)^{\frac{\sigma}{2}}$ is a second order approximation of $-(- \Delta)^{\frac{\sigma}{2}}$:

\begin{lem}[\cite{EJT18b}]\label{lem:trun_err_fraclap}
Assume $\sigma \in (0 ,2)$ \cm and 
$\phi\in C_b^4(\R^N)$. Then 
\begin{align}
\label{fraclap_trunc}
\big| (- \Delta_h)^{\frac{\sigma}{2}} \phi(x) - (- \Delta)^{\frac{\sigma}{2}} \phi(x)\big| \leq \tfrac N{10}h^2\Big(
\|D^4 \phi \|_\infty+ \tfrac{2-\sigma}{2+\sigma}c_N
\|\phi \|_\infty\Big),
\end{align}
\cb where 
$c_N = \sum_{i=1}^N \int_{\R^N}\big| \partial_{z_i}^{(4)}K_N(z,1) \big| \ dz$ 
and 
$K_N(x,t)$ 
 is the heat kernel in $\R^N$.
\end{lem}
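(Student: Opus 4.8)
The plan is to exploit that $-(-\Delta)^{\frac\sigma2}$ and $-(-\Delta_h)^{\frac\sigma2}$ are built from the \emph{same} subordination formula, \eqref{fraclap} and \eqref{discrete_fraclap}, so that subtracting the two representations the entire difference is governed by the difference of the two heat semigroups:
\begin{align*}
(-\Delta_h)^{\frac\sigma2}\phi(x)-(-\Delta)^{\frac\sigma2}\phi(x)=\frac{-1}{|\Gamma(-\frac\sigma2)|}\int_0^\infty\big(e^{t\Delta_h}\phi(x)-e^{t\Delta}\phi(x)\big)\frac{dt}{t^{1+\frac\sigma2}}.
\end{align*}
Thus the whole estimate reduces to controlling $w(\cdot,t):=e^{t\Delta_h}\phi-e^{t\Delta}\phi$ in $L^\infty$ and integrating it against the L\'evy weight $t^{-1-\frac\sigma2}$.

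First I would derive a Duhamel representation for $w$: differentiating $s\mapsto e^{(t-s)\Delta_h}e^{s\Delta}\phi$ and integrating in $s$ gives
\begin{align*}
w(\cdot,t)=-\int_0^t e^{(t-s)\Delta_h}E(\cdot,s)\,ds,\qquad E(\cdot,s):=(\Delta_h-\Delta)e^{s\Delta}\phi.
\end{align*}
Because the semi-discrete heat semigroup has nonnegative weights summing to $1$ (as used in the proof of Lemma~\ref{lem::discr_fraclap_quadrature}, so that $e^{t\Delta_h}\phi(x)=\sum_{\bj}G(\bj,t)\phi(x+x_{\bj})$ is an average), it is an $L^\infty$-contraction and $\|w(\cdot,t)\|_{L^\infty}\le\int_0^t\|E(\cdot,s)\|_{L^\infty}\,ds$. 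Inserting this into the subordination integral, interchanging the order of integration by Tonelli via $\int_s^\infty t^{-1-\frac\sigma2}\,dt=\frac2\sigma s^{-\frac\sigma2}$, and using the identity $|\Gamma(-\frac\sigma2)|=\frac2\sigma\Gamma(1-\frac\sigma2)$, everything collapses to
\begin{align*}
\big|(-\Delta_h)^{\frac\sigma2}\phi(x)-(-\Delta)^{\frac\sigma2}\phi(x)\big|\le\frac{1}{\Gamma(1-\frac\sigma2)}\int_0^\infty\|E(\cdot,s)\|_{L^\infty}\,s^{-\frac\sigma2}\,ds.
\end{align*}

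The crux is a two-sided bound on the finite-difference consistency error $E(\cdot,s)=(\Delta_h-\Delta)U(\cdot,s)$ with $U=e^{s\Delta}\phi$. A coordinatewise Taylor expansion yields the standard $\|E(\cdot,s)\|_{L^\infty}\le\frac{h^2}{12}\sum_{i=1}^N\|\partial_{x_i}^4U(\cdot,s)\|_{L^\infty}$, and I would then estimate these fourth derivatives in two complementary ways. For small times I commute the derivative with the semigroup and use the contraction property, $\|\partial_{x_i}^4U(\cdot,s)\|_{L^\infty}\le\|\partial_{x_i}^4\phi\|_{L^\infty}$, giving $\|E(\cdot,s)\|_{L^\infty}\le\frac{Nh^2}{12}\|D^4\phi\|_{L^\infty}$. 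For large times I instead move all four derivatives onto the kernel, $\partial_{x_i}^4U(\cdot,s)=(\partial_{x_i}^4K_N(\cdot,s))*\phi$, and apply Young's inequality together with the parabolic scaling $\partial_{x_i}^4K_N(x,s)=s^{-N/2-2}(\partial_{z_i}^4K_N)(x/\sqrt s,1)$, which gives $\|\partial_{x_i}^4K_N(\cdot,s)\|_{L^1}=s^{-2}\int_{\rn}|\partial_{z_i}^4K_N(z,1)|\,dz$ and hence $\|E(\cdot,s)\|_{L^\infty}\le\frac{h^2}{12}c_N\|\phi\|_{L^\infty}\,s^{-2}$.

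Finally I would split the $s$-integral at $s=1$, using the small-time bound on $(0,1)$ and the large-time bound on $(1,\infty)$; these integrate to $\int_0^1 s^{-\frac\sigma2}\,ds=\frac{2}{2-\sigma}$ and $\int_1^\infty s^{-2-\frac\sigma2}\,ds=\frac{2}{2+\sigma}$, cleanly separating the $\|D^4\phi\|_{L^\infty}$ and $c_N\|\phi\|_{L^\infty}$ contributions and producing the factor $\frac{2-\sigma}{2+\sigma}$ on the latter. The main obstacle is then purely the constant bookkeeping: one must check that the prefactor $\frac{1}{12}\cdot\frac{2}{(2-\sigma)\Gamma(1-\frac\sigma2)}$ is bounded by $\frac{1}{10}$ uniformly in $\sigma\in(0,2)$, which rests on the elementary fact that $\sigma\mapsto\frac{2}{(2-\sigma)\Gamma(1-\frac\sigma2)}$ stays below $\frac65$ on $(0,2)$ (it equals $1$ at both endpoints and peaks near $\sigma=1$, where it is $\frac{2}{\sqrt\pi}$). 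The first term then gives exactly the coefficient $\frac N{10}h^2$, while the second has an extra factor of $N\ge1$ to spare, so both fit inside the stated bound $\frac N{10}h^2\big(\|D^4\phi\|_{L^\infty}+\frac{2-\sigma}{2+\sigma}c_N\|\phi\|_{L^\infty}\big)$.
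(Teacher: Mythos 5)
Your proof is correct, and its core ingredients coincide with the paper's: both subtract the two subordination formulas \eqref{fraclap} and \eqref{discrete_fraclap}, reduce everything to the semigroup difference $e^{t\Delta_h}\phi-e^{t\Delta}\phi$, bound that difference by the time-integral of the finite-difference consistency error $(\Delta_h-\Delta)e^{s\Delta}\phi$ (Taylor constant $\tfrac{h^2}{12}$), and estimate the fourth derivatives of $e^{s\Delta}\phi$ in the same two complementary ways — commuting derivatives with the semigroup for small times, and moving them onto the heat kernel with parabolic self-similarity for large times — before concluding with the same Gamma-function identities ($12\,\Gamma(2-\tfrac{\sigma}{2})\geq 10$ in the paper, equivalently your bound $\tfrac{2}{(2-\sigma)\Gamma(1-\sigma/2)}=\tfrac{1}{\Gamma(2-\sigma/2)}\leq\tfrac65$). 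The organizational differences are still worth recording. (i) The paper controls $E:=e^{t\Delta_h}\phi-e^{t\Delta}\phi$ by a comparison-principle argument for the semi-discrete heat equation ($\pm\int_0^t\|\tau(\cdot,s)\|_\infty\,ds$ are super-/subsolutions), whereas you use Duhamel plus $L^\infty$-contractivity of $e^{t\Delta_h}$; these give the identical bound $\|E(\cdot,t)\|_\infty\leq\int_0^t\|\tau(\cdot,s)\|_\infty\,ds$. (A minor sign slip: with your conventions $w(\cdot,t)=+\int_0^te^{(t-s)\Delta_h}E(\cdot,s)\,ds$, but this is immaterial since you only use norms.) (ii) Your Tonelli interchange, reducing everything to the single weighted integral $\tfrac{1}{\Gamma(1-\sigma/2)}\int_0^\infty\|E(\cdot,s)\|_\infty s^{-\sigma/2}\,ds$ split at $s=1$, is cleaner than the paper's route, which first computes a two-regime bound on $\|E(\cdot,t)\|_\infty$ (for $t\leq1$ and $t\geq1$) and only then integrates against $t^{-1-\sigma/2}$; both yield the same $\sigma$-dependence. (iii) You apply the kernel bound $\|\del^{(4)}_{x_i}K_N(\cdot,s)\|_{L^1}=s^{-2}\|\del^{(4)}_{z_i}K_N(\cdot,1)\|_{L^1}$ coordinatewise and then sum, getting $c_N\|\phi\|_\infty$ in the second term, whereas the paper bounds each coordinate term by the full sum $c_N$ and thus ends up with $N c_N\|\phi\|_\infty$; your constant is sharper by a factor of $N$, which is exactly the slack you point out when matching the stated bound \eqref{fraclap_trunc}.
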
 

\cb This is \cite[Lemma 4.22]{EJT18b} with precise values of the constants. \cm 
Note that the rate $O(h^2)$ is uniform over $\sigma\in(0,2)$.
\cb
We now redo the proof to get explicit values of the constants. 
\begin{proof}
 By the definition of the fractional Laplacian, we have that
    \begin{align*}
        \big|& (- \Delta_h)^{\frac{\sigma}{2}} \phi(x) - (- \Delta)^{\frac{\sigma}{2}} \phi(x)\big| \leq \frac{1}{|\Gamma(-\frac{\sigma}{2})|} \Big[\int_0^1 |E(x,t)| \frac{dt}{t^{1+\frac{\sigma}{2}}} +\int_1^\infty |E(x,t)| \frac{dt}{t^{1+\frac{\sigma}{2}}} \Big],
    \end{align*}
    for $E(x,t):= e^{t \Delta_h}\phi(x) -e^{t\Delta}\phi(x)$.
    Let $\tau(x,t):=\partial_t e^{t\Delta}\phi(x)-\Delta_h e^{t\Delta}\phi(x)$, and observe that since $e^{t\Delta_h}\phi(x)$ solves the semi-discrete heat equation \eqref{eq:semi_disc_heat_eq},
    \begin{align*}
        \partial_t E &= \Delta_h E -\tau.
    \end{align*}
    Since $\Delta_h$ is monotone and $E(x,0)=0$, comparison holds and leads to the bound 
    \begin{align*}
        \|E(\cdot,t) \|_\infty \leq \int_0^t \| \tau(\cdot, s) \|_\infty \; ds
    \end{align*}
($\pm$ the right hand side is a super/subsolution).
    By Taylor expansion,
    \begin{align*}
        | \tau(x,t) | \leq \frac{h^2}{12} \sum_{i=1}^N \max_x \big| \del^{(4)}_{x_i} e^{t\Delta}\phi(x) \big|, 
    \end{align*}
 and then by introducing the heat kernel $K_N$ we find that,
    \begin{align*}
         &\big|\del^{(4)}_{x_i} e^{t\Delta}\phi(x)\big| = \Big| \del^{(4)}_{x_i} \int_{\R^N} \phi(x-y)K_N(y,t) \; dy\Big| \\[0.1cm]
         &\leq \begin{cases}
             \|\partial_{x_i}^4 \phi \|_\infty \int_{\R^N} K_N(y,t) \; dy \\[0.2cm]
             \| \phi \|_\infty \int_{\R^N} |\del^{(4)}_{x_i}K_N(y,t)| \; dy
         \end{cases}
          \leq 
         \begin{cases}
              \|\partial_{x_i}^4 \phi \|_\infty \\[0.2cm]
              \| \phi \|_\infty \frac{c_N}{t^2},
         \end{cases}
    \end{align*}
    where $c_N = \int_{\R^N}\sum_{i=1}^N \big| \partial_{z_i}^{(4)}K_N(z,1) \big| \ dz$ and we have used $K_N\geq0$, $\int K_N dx=1$, and self-similarity  $K_N(x,t)=\frac{1}{t^{N/2}}K_N(\frac x{\sqrt{t}},1)$.
    For $0 \leq t \leq 1$, we use the first bound,
    \begin{align*}
        \|E(\cdot,t) \|_\infty \leq th^2\frac{N}{12}\|D^4 \phi\|_\infty. 
    \end{align*}
    When $t\geq 1$, we split the integral and use both bounds as follows:
    \begin{align*}
        \|E(\cdot,t) \|_\infty &\leq \int_0^1 h^2\frac{N}{12}\|D^4 \phi \|_\infty \; ds + \int_1^t h^2 \frac{N}{12} \| \phi \|_\infty \frac{c_N}{s^2} \; ds \\
        &=  h^2\frac{N}{12}\|D^4 \phi \|_\infty +h^2 \frac{N}{12} \| \phi \|_\infty c_N \Big(1-\frac{1}{t} \Big).
    \end{align*}
 We can now compute the finial estimate:
    \begin{align*}
        \big|& (- \Delta_h)^{\frac{\sigma}{2}} \phi(x) - (- \Delta)^{\frac{\sigma}{2}} \phi(x)\big| \\[0.1cm] 
        &\leq \frac{1}{|\Gamma(-\frac{\sigma}{2})|} h^2\frac{N}{12}\bigg[\int_0^1 t\|D^4 \phi \|_\infty \frac{dt}{t^{1+\frac{\sigma}{2}}}  +\int_1^\infty \Big( \|D^4 \phi \|_\infty +\| \phi \|_\infty c_N \big(1-\frac{1}{t} \big) \Big) \frac{dt}{t^{1+\frac{\sigma}{2}}} \bigg] \\[0.1cm]
        &= \frac{1}{|\Gamma(-\frac{\sigma}{2})|} h^2\frac{N}{12}\bigg[\|D^4 \phi\|_\infty\frac{1}{1-\frac{\sigma}{2}}  + \big( \|D^4 \phi\|_\infty  + \|\phi\|_\infty c_N \big)\frac{2}{\sigma}- \| \phi\|_\infty c_N \frac{1}{1+\frac{\sigma}{2}} \bigg] \\[0.1cm]
        & = h^2 \frac{N}{\cm 12\nc|\Gamma(-\frac{\sigma}{2})|}\frac{1}{\cm \frac \sigma2\nc}\Big(\frac{\|D^4 \phi \|_\infty}{\cm 1-\frac\sigma2\nc} +\frac{c_N\|\phi\|_\infty}{\cm 1+\frac\sigma2\nc}\Big).
    \end{align*}
\cm
Two applications of the identity $x\Gamma(x)=\Gamma(x+1)$ implies that 
$-\frac\sigma2(1-\frac\sigma2)\Gamma(-\frac\sigma2)=\Gamma(2-\frac\sigma2)$, and we can conclude by noting that for $\sigma\in(0,2)$, $\Gamma(2-\frac\sigma2)\in(0.88,1)$ and hence $12\,\Gamma(2-\frac\sigma2)\geq 10$.\nc
\end{proof}

From Lemma \ref{lem::discr_fraclap_quadrature} and {\cb Lemma \ref{lem:trun_err_fraclap} }it follows that $-(- \Delta_h)^{\frac{\sigma}{2}}$ is a monotone (positive coefficients) quadrature approximation of $-(- \Delta)^{\frac{\sigma}{2}}$ satisfying a discrete Levy condition. 

\subsection{Numerical approximation of the nonlocal PDE}
Approximating time derivatives by forward Euler and fractional Laplacians by powers of discrete Laplacians \eqref{discrete_fraclap}, we get the following explicit scheme:
\begin{equation}\label{scheme::simple_parabolic_eq}
\begin{split}
    U_{\bi}^{n+1} &= U_{\bi}^n+\tau \Big[ F\big(-(-\Delta_h)^{\frac{\sigma}{2}}U_{\bi}^n\big)+f_{\bi}^n\Big]\qquad\text{for}\qquad \bi\in\Z^N, n\in \mathcal{T}_\tau^T,\\
    U_{\bi}^{0} &= u_0 \qquad\text{for}\qquad \bi\in\Z^N,
\end{split}
\end{equation}
where $U_{\bi}^n=u(x_{\bi},t_n)$ and $f_{\bi}^n=f(x_{\bi},t_n)$ for $x_{\bi} \in \mathcal{G}_h$ and $t_n \in \mathcal{T}_\tau^T$. 

This scheme is monotone/satisfies a comparison principle under a CFL condition:
\begin{align}
    \tau \leq \frac{1}{L_F C_\sigma} h^{\sigma},\qquad \text{$C_\sigma$ and $L_F$ are given by Lemma \ref{lem::discr_fraclap_quadrature} 
and \ref{F1}}.
    \label{CFL_condition}
\end{align}
 
\begin{thm}[Comparison]\label{comp_S}
    Assume \ref{F1}, \eqref{CFL_condition}, and $U,V$ sub-, supersolutions of \eqref{scheme::simple_parabolic_eq}.\footnote{Subsolution means \eqref{scheme::simple_parabolic_eq} is satisfied as an $\leq$-inequality.} If $U_\bj^0 \leq V_\bj^0$ for all $\bj\in \mathbb{Z}^N$, then $U_\bj^n \leq V_\bj^n$ holds for all $n\geq1$ and $\bj\in \mathbb{Z}^N$.
    \label{thm::scheme_comparison}
\end{thm}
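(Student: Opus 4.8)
The plan is to argue by induction on the time level $n$, so that it suffices to prove the one-step statement: if $U^n_\bj \le V^n_\bj$ for all $\bj \in \zn$, then $U^{n+1}_\bj \le V^{n+1}_\bj$ for all $\bj$. Throughout I write $\mathcal{L}_h := -(-\Delta_h)^{\frac\sigma2}$ for the monotone quadrature operator from Lemma \ref{lem::discr_fraclap_quadrature}, and I set $W^n_\bi := U^n_\bi - V^n_\bi$ together with $e^n := \sup_{\bk \in \zn} W^n_\bk$ (finite, since the sub- and supersolutions are taken bounded, which is also what makes $\mathcal{L}_h$ well defined).

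First I would subtract the supersolution inequality for $V$ from the subsolution inequality for $U$ in \eqref{scheme::simple_parabolic_eq}; the common forcing $f^n_\bi$ cancels and I obtain the one-step inequality
\begin{align*}
W^{n+1}_\bi \le W^n_\bi + \tau\big[F(\mathcal{L}_h U^n_\bi) - F(\mathcal{L}_h V^n_\bi)\big].
\end{align*}
To the bracket I apply \ref{F1}, which gives $F(\mathcal{L}_h U^n_\bi) - F(\mathcal{L}_h V^n_\bi) \le L_F\big(\mathcal{L}_h W^n_\bi\big)^+$, using linearity of $\mathcal{L}_h$ so that $\mathcal{L}_h U^n_\bi - \mathcal{L}_h V^n_\bi = \mathcal{L}_h W^n_\bi$.

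The crux is then to bound $\mathcal{L}_h W^n_\bi$ from above by means of the weights of Lemma \ref{lem::discr_fraclap_quadrature}. Writing $\mathcal{L}_h W^n_\bi = \sum_{\bj \ne 0}(W^n_{\bi+\bj} - W^n_\bi)\kappa_{\sigma,h,\bj}$ and using $\kappa_{\sigma,h,\bj}\ge 0$, $W^n_{\bi+\bj}\le e^n$, and $\sum_{\bj\ne0}\kappa_{\sigma,h,\bj}=C_\sigma/h^\sigma$, I get $\mathcal{L}_h W^n_\bi \le (e^n - W^n_\bi)C_\sigma/h^\sigma$. Since the right-hand side is nonnegative ($e^n$ is the supremum), the same bound holds for the positive part, and substituting back yields
\begin{align*}
W^{n+1}_\bi \le \Big(1 - \tau L_F \tfrac{C_\sigma}{h^\sigma}\Big) W^n_\bi + \tau L_F \tfrac{C_\sigma}{h^\sigma}\, e^n.
\end{align*}

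Here the CFL condition \eqref{CFL_condition} enters decisively: it guarantees $1 - \tau L_F C_\sigma/h^\sigma \ge 0$, so the coefficient multiplying $W^n_\bi$ is nonnegative, and bounding $W^n_\bi \le e^n$ in that term gives $W^{n+1}_\bi \le e^n$ for every $\bi$. Taking the supremum over $\bi$ gives $e^{n+1} \le e^n$, and iterating down to $e^0 = \sup_\bk(U^0_\bk - V^0_\bk) \le 0$ closes the induction. I expect the main obstacle to be purely bookkeeping of signs: the positive part coming from \ref{F1} must be absorbed by the supremum bound (which is precisely where nonnegativity of the weights is used), and the convex-combination structure forcing $W^{n+1}_\bi \le e^n$ only materializes once the CFL condition makes the diagonal coefficient nonnegative. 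Verifying that $e^n$ is finite, i.e. the boundedness of the iterates, is the only regularity point that needs to be stated carefully.
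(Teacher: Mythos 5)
Your proof is correct, but it runs on a different engine than the paper's. The paper argues pointwise and uses the ordering hypothesis $U^n_\bj-V^n_\bj\le 0$ \emph{inside} the one-step estimate: after applying \ref{F1}, it absorbs the zeroth-order term into the positive part via the elementary inequality $a+b^+\le (a+b)^+$ for $a\le 0$, and then checks that everything inside $(\cdot)^+$ is nonpositive term by term (the off-diagonal terms by $\kappa_{\sigma,h,\bj}\ge 0$ and the sign of $W^n$, the diagonal term by the CFL condition), so that the positive part vanishes and $W^{n+1}_\bi\le 0$ follows directly. You instead prove the sup-contraction $e^{n+1}\le e^n$, never using the sign of $W^n$ at the intermediate stage; the ordering hypothesis enters only at the very end through $e^0\le 0$. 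Your intermediate statement is in fact stronger --- a discrete maximum principle
\begin{equation*}
\sup_{\bi}\big(U^{n+1}_\bi-V^{n+1}_\bi\big)\ \le\ \sup_{\bi}\big(U^{n}_\bi-V^{n}_\bi\big),
\end{equation*}
valid for any bounded sub/supersolution pair regardless of ordering --- and the convex-combination bookkeeping is arguably cleaner. What it costs is exactly the point you flagged yourself: you need $e^n<+\infty$, i.e.\ boundedness of the iterates, which the theorem does not state explicitly (it is implicit in making $-(-\Delta_h)^{\sigma/2}$ well defined, and holds in every application in the paper), whereas the paper's pointwise sign-propagation argument never invokes a finite supremum. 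In short: same ingredients (\ref{F1}, $\kappa_{\sigma,h,\bj}\ge 0$, $\sum_{\bj\ne 0}\kappa_{\sigma,h,\bj}=C_\sigma h^{-\sigma}$, and the CFL condition \eqref{CFL_condition}), but a genuinely different one-step mechanism, trading a mild extra hypothesis for a stronger conclusion.
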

\begin{proof}
        Consider the following calculation:
        \begin{align*}
            &U_\bi^1-V_\bi^1 \\ 
            &\; {\cb=\ }  U_\bi^0-V_\bi^0 +\tau \Big( F\big( -(-\Delta_h)^{\frac{\sigma}{2}}U_\bi^0\big) - F\big( -(-\Delta_h)^{\frac{\sigma}{2}}V_\bi^0\big)\Big) \\
            &\leq U_\bi^0-V_\bi^0 + \tau L_F\Big( -(-\Delta_h)^{\frac{\sigma}{2}}\big(U_\bi^0-V_\bi^0\big)  \Big)^+ \\
            &= U_\bi^0-V_\bi^0 + \tau L_F\Big( \sum_{\bj \in \zn \setminus \{0\}} \Big( 
            \big(U_{\bi+\bj}^0-V_{\bi+\bj}^0\big)-\big(U_\bi^0-V_\bi^0\big)
            \Big) \kappa_{\sigma, h,\bj}  \Big)^+ 
        \end{align*}
\cb Since $U_\bj^0 - V_\bj^0\leq0$, and $a+b^+\leq (a+b)^+$ when $a\leq0$, we 
get\normalcolor
        \begin{align*}
             &{\cb U_\bi^1-V_\bi^1} \\
             &\leq  \tau L_F\Big( \sum_{\bj \in \zn \setminus \{0\}}  
            (U_{\bi+\bj}^0-V_{\bi+\bj}^0)\kappa_{\sigma, h,\bj}+(U_\bi^0-V_\bi^0)
              \Big(\frac{1}{\tau L_F}-\sum_{\bj \in \zn \setminus \{0\}}\kappa_{\sigma, h,\bj}\Big)  \Big)^+ \\
              &\leq 0,
        \end{align*}
        where the last inequality follows from the CFL-condition \eqref{CFL_condition}, \cb $\kappa_{\sigma, h,\bj}\geq0$ (Lemma \ref{lem::discr_fraclap_quadrature}), and $U_\bj^0-  V_\bj^0\leq0$.
        Hence $U_\bi^1 \leq V_\bi^1$ for all $\bi\in \mathbb{Z}^N$. The result follows by induction.\nc
 \end{proof}
From monotoncity/comparison, uniqueness and $L^{\infty}$-stablilty follow: 
\begin{thm}[Existence, uniqueness, and $L^\infty$-stability]\label{eus}
    Assume \ref{F1}, \ref{F3}, and CFL condition \eqref{CFL_condition}. Then there exists a unique solution $U$ of  \eqref{scheme::simple_parabolic_eq} satisfying
    $$\|U^n\|_{L^{\infty}} \leq \|u_0\|_{L^{\infty}} + t_n (|F(0)|+\|f\|_{L^{\infty}}).$$
\end{thm}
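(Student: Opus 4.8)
The plan is to establish the three assertions in the natural order, deriving existence and uniqueness directly from the explicit structure of the scheme and then obtaining the sharp $L^\infty$ bound from the already-proved comparison principle (Theorem~\ref{comp_S}), which applies precisely because the CFL condition \eqref{CFL_condition} is assumed. First I would treat existence and uniqueness. The scheme \eqref{scheme::simple_parabolic_eq} is explicit, so once $U^n$ is known, each $U^{n+1}_\bi$ is given by a closed formula and is therefore unique; the only thing to verify is that $-(-\Delta_h)^{\frac\sigma2}U^n_\bi$ is well defined. By Lemma~\ref{lem::discr_fraclap_quadrature} the quadrature weights satisfy $\kappa_{\sigma,h,\bj}\geq0$ and $\sum_{\bj\neq0}\kappa_{\sigma,h,\bj}=C_\sigma h^{-\sigma}<\infty$, so for bounded $U^n$ the series $\sum_{\bj\neq0}(U^n_{\bi+\bj}-U^n_\bi)\kappa_{\sigma,h,\bj}$ converges absolutely and obeys $|-(-\Delta_h)^{\frac\sigma2}U^n_\bi|\leq 2C_\sigma h^{-\sigma}\|U^n\|_{L^\infty}$. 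Combined with the Lipschitz bound $|F(l)|\leq|F(0)|+L_F|l|$ coming from \ref{F1}, this yields the crude a priori estimate $\|U^{n+1}\|_{L^\infty}\leq(1+2L_FC_\sigma h^{-\sigma}\tau)\|U^n\|_{L^\infty}+\tau(|F(0)|+\|f\|_{L^\infty})$. Starting from the bounded datum $U^0=u_0$ (by \ref{F3}) and iterating over the finite index set $n=0,\dots,M$, induction produces a unique bounded sequence $\{U^n\}$.

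For the sharp $L^\infty$-stability bound I would use the comparison principle against the spatially constant barrier $W^n:=\|u_0\|_{L^\infty}+t_n(|F(0)|+\|f\|_{L^\infty})$, in direct analogy with the continuous argument in Proposition~\ref{prop::comparison_existence_stability}(iii). The decisive simplification is that $W^n$ does not depend on $\bi$, so every difference $W^n-W^n$ in the quadrature vanishes and $-(-\Delta_h)^{\frac\sigma2}W^n=0$. Consequently $W^{n+1}-W^n=\tau(|F(0)|+\|f\|_{L^\infty})\geq\tau(F(0)+f^n_\bi)=\tau\big(F(-(-\Delta_h)^{\frac\sigma2}W^n)+f^n_\bi\big)$, which shows $W$ is a supersolution; the mirror computation, using $F(0)\geq-|F(0)|$ and $f^n_\bi\geq-\|f\|_{L^\infty}$, shows $-W$ is a subsolution. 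Since $-W^0=-\|u_0\|_{L^\infty}\leq u_0(x_\bj)\leq\|u_0\|_{L^\infty}=W^0$, applying Theorem~\ref{comp_S} twice gives $-W^n\leq U^n_\bj\leq W^n$ for all $n$ and $\bj$, which is exactly the stated estimate.

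I expect the only genuinely delicate point to be the mild circularity between well-definedness and the bound: invoking Theorem~\ref{comp_S} presupposes that $U$ is a sequence for which the infinite quadrature sum converges, i.e.\ that each $U^n$ is bounded, yet boundedness is part of the conclusion. I would resolve this by separating the two concerns, exactly as above: the crude geometric a priori estimate of the first paragraph guarantees that each $U^n$ with $n\leq M$ is finite and bounded, so $-(-\Delta_h)^{\frac\sigma2}$ is legitimately defined along the entire finite trajectory, and only then do I feed $U$ together with $\pm W$ into the comparison principle to upgrade the crude bound to the sharp one. Everything else is a faithful transcription of the continuous barrier argument, the essential discrete ingredient being that $-(-\Delta_h)^{\frac\sigma2}$ annihilates spatially constant functions just as its continuous counterpart does.
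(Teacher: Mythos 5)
Your proposal is correct and follows essentially the same route as the paper: existence and uniqueness from the explicit structure of the scheme, and the sharp bound by feeding the spatially constant barriers $\pm W$ (which the discrete fractional Laplacian annihilates) into the comparison principle of Theorem~\ref{comp_S}. Your extra step — the crude geometric a priori estimate ensuring each $U^n$ is bounded so the quadrature series converges — is a careful elaboration of what the paper dismisses as ``immediate since the scheme is explicit,'' not a different argument.
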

\begin{proof}
Existence \cb  and uniqueness are \nc immediate since the scheme is explicit. 
To show the $L^\infty$-bound, note that $\pm W$ are super/sub solutions of \eqref{scheme::simple_parabolic_eq} when $$W(x_\bj, t_n) = \|u_0\|_{L^{\infty}}+t_n (|F(0)|+\|f\|_{L^{\infty}}),$$
    since e.g.
        $W_\bj^{n+1}-W_\bj^n- \tau F\big((-\Delta_h)^{\frac{\sigma}{2}}W_\bj^n\big)
        = \tau (|F(0)|+\|f\|_{L^{\infty}}) - \tau F(0)
          \geq \tau f(x_\bj,t_n).$
    Then by comparison  (Theorem \ref{comp_S}), $-W\leq U\leq W$ and the result follows.
\end{proof}

\subsection{Convergence of the scheme} We will use the method of half-relaxed limits \cite{BS91}, and to do that we need to extend the scheme to the whole space and write it in a particular form so that we can verify the assumptions of the method. Let $U_h : \mathbb{R}^N \times [0,T] \rightarrow \mathbb{R}$ denote the solution and
\begin{align}
    S(h,x,t, U_h(x,t), U_h) = 0\qquad\text{in}\qquad  \mathbb{R}^N\times [0,T],
\label{eq::scheme_abstract_form}
\end{align}
the scheme on the whole space,
where $S$ is defined as
\begin{align}\label{S_definition}
    &S(h,x,t, r, U_h)  \\
    &\ =\ \begin{cases}
        \frac{r-U_h(x,t-\tau)}{\tau}-F\big(-(-\Delta_h)^{\frac{\sigma}{2}}U_h(x,t-\tau) \big)-f(x,t-\tau), & t\in[\tau,T],
        \\[0.2cm]
        r-u_0(x),& t\in[0,\tau). 
    \end{cases}\nonumber
\end{align}
The scheme \eqref{eq::scheme_abstract_form} and solution $U_h$ coinside with the scheme \eqref{scheme::simple_parabolic_eq} and solution $U$ when restricted to the grid $\mathcal{G}_h\times \mathcal{T}_\tau^T$.
Note that since $\tau = o(1)$ as $h \rightarrow 0$ below, we have skipped the depedence on $\tau$ in $S$. 

\begin{lem}[Existence, uniqueness, and $L^\infty$-stability]\label{stab}
Assume \ref{F1}, \ref{F3}, and CFL condition \eqref{CFL_condition}. Then there exists a unique solution $U_h$ of \eqref{eq::scheme_abstract_form} satisfying
$$\|U_h(t)\|_{L^{\infty}} \leq \|u_0\|_{L^{\infty}} + t (|F(0)|+\|f\|_{L^{\infty}}).$$
\end{lem}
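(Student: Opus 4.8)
The plan is to exploit the fact that \eqref{eq::scheme_abstract_form} is an \emph{explicit} scheme, so that both existence/uniqueness and the stability bound reduce to facts already established for the grid scheme \eqref{scheme::simple_parabolic_eq}.

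First I would settle existence and uniqueness. Reading off \eqref{S_definition}, the equation $S(h,x,t,U_h(x,t),U_h)=0$ prescribes $U_h(\cdot,t)=u_0$ on the initial slab $t\in[0,\tau)$ and, for $t\in[\tau,T]$, gives $U_h(x,t)$ by the single explicit forward-Euler step
\begin{equation*}
U_h(x,t)=U_h(x,t-\tau)+\tau\Big[F\big(-(-\Delta_h)^{\frac{\sigma}{2}}U_h(x,t-\tau)\big)+f(x,t-\tau)\Big].
\end{equation*}
Since $t-\tau\in[0,T-\tau]$, this determines $U_h$ on each slab $[k\tau,(k+1)\tau)$ from the previous one, so after finitely many slabs $U_h$ is uniquely determined on all of $\rn\times[0,T]$; there is no fixed point to solve. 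The one thing to verify is that the nonlocal term is well-defined at each step: by Lemma \ref{lem::discr_fraclap_quadrature} the weights are nonnegative and summable, $\sum_{\bj\in\zn\setminus\{0\}}\kappa_{\sigma,h,\bj}=C_\sigma h^{-\sigma}<\infty$, whence $\big|-(-\Delta_h)^{\frac{\sigma}{2}}\phi(x)\big|\le 2C_\sigma h^{-\sigma}\|\phi\|_{L^\infty}$. Starting from the bounded datum $u_0$ (Assumption \ref{F3}) and using $f\in C_b$, this shows inductively that each slab value stays bounded, so the operator is always applied to a bounded function and the recursion is legitimate.

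For the $L^\infty$-bound I would reuse the barrier from the proof of Theorem \ref{eus}. Set $W(x,t)=\|u_0\|_{L^\infty}+t(|F(0)|+\|f\|_{L^\infty})$. Because $W$ is independent of $x$, we have $-(-\Delta_h)^{\frac{\sigma}{2}}W\equiv0$, and a direct computation gives, for $t\in[\tau,T]$,
\begin{equation*}
S(h,x,t,W(x,t),W)=\big(|F(0)|+\|f\|_{L^\infty}\big)-F(0)-f(x,t-\tau)\ge0,
\end{equation*}
while $S(h,x,t,W(x,t),W)=W(x,t)-u_0(x)\ge0$ on $[0,\tau)$; thus $W$ is a supersolution and, symmetrically, $-W$ a subsolution of \eqref{eq::scheme_abstract_form}. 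It then remains to invoke comparison on the whole space: the argument of Theorem \ref{comp_S} is purely pointwise in $x$, controlling the value at $(x,t)$ through the values at $(x+x_{\bj},t-\tau)$, and under the CFL condition \eqref{CFL_condition} together with $\kappa_{\sigma,h,\bj}\ge0$ the same slab-by-slab induction yields $-W\le U_h\le W$, which is exactly the stated bound.

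I expect the only genuine obstacle to be the bookkeeping needed to confirm that comparison really carries over from the grid $\mathcal{G}_h\times\mathcal{T}_\tau^T$ to the whole of $\rn\times[0,T]$. The clean way to see this is that $-(-\Delta_h)^{\frac{\sigma}{2}}$ couples the value at $x$ only to values on the shifted lattice $x+h\zn$, so \eqref{eq::scheme_abstract_form} decouples into independent copies of \eqref{scheme::simple_parabolic_eq}, one for each coset of $h\zn$ and each time-phase $t\bmod\tau$; on each copy Theorem \ref{comp_S} applies verbatim, and since the resulting estimate depends only on $\|u_0\|_{L^\infty}$, $\|f\|_{L^\infty}$ and $F(0)$ (not on the coset or phase), taking the supremum over $x$ gives the whole-space bound. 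This decoupling observation, rather than any analytic difficulty, is the crux.
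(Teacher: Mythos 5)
Your proposal is correct and is essentially the paper's own argument: the paper's one-line proof is precisely your closing ``decoupling'' observation, namely that at any point $(x,t)$ the function $U_h$ restricted to the shifted grid $(x,t)+\mathcal{G}_h\times\mathcal{T}_\tau^T$ solves the grid scheme \eqref{scheme::simple_parabolic_eq}, so that Theorem \ref{eus} applies on each coset and the bound, being independent of the coset and time-phase, passes to the supremum over $\rn\times[0,T]$. Your slab-by-slab existence step and the barrier/comparison computation simply inline the proof of Theorem \ref{eus} instead of citing it, so no new idea is involved.
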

\begin{proof}
Since at any point $(x,t)$, $U_h$ is the solution of \eqref{scheme::simple_parabolic_eq} on the grid $(x,t)+\mathcal G_h\times \mathcal T_\tau^T$, the result follows from Theorem \ref{eus}.
\end{proof}
\begin{rem}
    By comparison, $U_h$ will inherit continuity in $x$ from the data $f$ and $u_0$. It is also possible to show approximate continuity in time, $\sup_{|x|\leq R}|U_h(t,x)-U_h(s,x)|\leq \tilde\omega_R(|t-s|+\tau^{\frac12})$ for some modulus $\tilde\omega_R$.
\end{rem}

To show convergence, in addition to $L^\infty$-stability, we to show that the scheme in the form \eqref{eq::scheme_abstract_form} is monotone and stable in the sense of Barles-Souganidis \cite{BS91}.

\begin{lem}[BS monotone]\label{mono}
    Assume \ref{F1}, \ref{F3}, and CFL condition \eqref{CFL_condition}.
    The function $S(h,x,t,r,U_h)$ defined in \eqref{S_definition} is nondecreasing in $r$ and nonincreasing in $U_h$.
\end{lem}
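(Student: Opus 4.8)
The plan is to verify the two monotonicity properties separately, handling the two branches of the definition \eqref{S_definition} in turn. Monotonicity in $r$ is immediate: in the branch $t\in[\tau,T]$ the variable $r$ enters only through the term $\frac r\tau$, and in the branch $t\in[0,\tau)$ only through $r$ itself, so in both cases $S$ is a strictly increasing affine function of $r$ (slope $\frac1\tau>0$ or $1>0$). Hence $S$ is nondecreasing in $r$ with no assumptions beyond those already in force.

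The substantive part is monotonicity in $U_h$, i.e.\ that $U_h\le V_h$ pointwise implies $S(h,x,t,r,U_h)\ge S(h,x,t,r,V_h)$. For $t\in[0,\tau)$ this is trivial since $S$ does not depend on $U_h$ there. For $t\in[\tau,T]$ I would set $w:=U_h(\cdot,t-\tau)$, $\tilde w:=V_h(\cdot,t-\tau)$, and $\Delta w:=\tilde w-w\ge0$, and compute
\[
S(h,x,t,r,U_h)-S(h,x,t,r,V_h)=\frac{\Delta w(x)}{\tau}-\Big[F(L_w)-F(L_{\tilde w})\Big],
\]
where $L_w:=-(-\Delta_h)^{\frac\sigma2}w(x)$ and $L_{\tilde w}$ is defined analogously. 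The goal is to show this quantity is nonnegative.

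The heart of the argument is to bound $F(L_w)-F(L_{\tilde w})$ from above. By \ref{F1}, $F$ is nondecreasing and Lipschitz, so $F(L_w)-F(L_{\tilde w})\le L_F(L_w-L_{\tilde w})^+$. Using the quadrature representation of Lemma \ref{lem::discr_fraclap_quadrature}, linearity of the operator, the identity $\sum_{\bj}\kappa_{\sigma,h,\bj}=\frac{C_\sigma}{h^\sigma}$, and $w-\tilde w=-\Delta w$, I would write
\[
L_w-L_{\tilde w}=\frac{C_\sigma}{h^\sigma}\,\Delta w(x)-\sum_{\bj\in\zn\setminus\{0\}}\Delta w(x+x_\bj)\,\kappa_{\sigma,h,\bj}.
\]
Since $\kappa_{\sigma,h,\bj}\ge0$ and $\Delta w\ge0$, the subtracted sum is nonnegative and $\frac{C_\sigma}{h^\sigma}\Delta w(x)\ge0$, so $(L_w-L_{\tilde w})^+\le \frac{C_\sigma}{h^\sigma}\Delta w(x)$. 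Combining with the Lipschitz bound gives $F(L_w)-F(L_{\tilde w})\le L_F\frac{C_\sigma}{h^\sigma}\Delta w(x)$, and the CFL condition \eqref{CFL_condition}, equivalently $L_F\frac{C_\sigma}{h^\sigma}\le\frac1\tau$, turns this into $F(L_w)-F(L_{\tilde w})\le\frac{\Delta w(x)}\tau$, which forces the displayed difference to be $\ge0$.

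I expect the main obstacle to be that the argument of $F$ is not itself monotone in $U_h$: increasing $U_h$ raises the off-diagonal contributions to $-(-\Delta_h)^{\frac\sigma2}U_h$ but lowers the diagonal contribution, so monotonicity of $F$ alone is useless. The resolution is to discard the favorable nonnegative off-diagonal terms in $(L_w-L_{\tilde w})^+$, retain only the diagonal term $\frac{C_\sigma}{h^\sigma}\Delta w(x)$, and absorb it against the explicit forward-Euler term $\frac{\Delta w(x)}\tau$ via the CFL condition. This is precisely the balance underlying the comparison principle of Theorem \ref{thm::scheme_comparison}, so the computation here can be viewed as a restatement of that proof in the $S$-notation required by \cite{BS91}.
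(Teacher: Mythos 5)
Your proposal is correct and follows essentially the same route as the paper: the paper's proof of this lemma is a short sketch that defers to the computation in the comparison Theorem \ref{comp_S}, and your argument is exactly that computation (Lipschitz/monotonicity of $F$ via \ref{F1}, the quadrature representation with nonnegative weights summing to $C_\sigma/h^\sigma$ from Lemma \ref{lem::discr_fraclap_quadrature}, and absorption of the diagonal term by the CFL condition \eqref{CFL_condition}), rewritten in the $S$-notation. Your closing remark that this restates the comparison proof is precisely the paper's own point of view.
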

\begin{proof}
Increasing in $r$ is immediate from the definition. Nonincreasing in $U_h$ follows from a direct computation using the properties of $-(-\Delta_h)^{\frac{\sigma}2}$, including positivity and estimates on the weights from Lemma \ref{lem::discr_fraclap_quadrature} combined with the CLF condition \eqref{CFL_condition}. The details are essentially given in the proof of comparison Theorem \ref{comp_S}.
\end{proof}

\begin{lem}[BS consistent]\label{thm:BS-consist}
 Assume \ref{F1}, \ref{F3}, $\tau = o(1)$ as $h \rightarrow 0$, and $S$ is defined in \eqref{S_definition}. If $\phi \in C^2 \cap C_b(\overline Q_T)$ and $\eta_h \geq 0$ is such that $\frac{\eta_h}{\tau} \rightarrow 0$ as $h \rightarrow 0$, then \footnote{$\displaystyle (u_0)_*(x)= \liminf_{y\to x} u_0(y)$ and $\displaystyle (u_0)^*(x)= \limsup_{y\to x} u_0(y)$.} 
    \begin{align*}
        &\liminf_{(h,t,x,\xi) \rightarrow (0,t_0, x_0, 0)} S(h,x,t, \phi(x,t)+\xi-\eta_h, \phi+\xi)
        \geq \\
        &\begin{cases}
            \phi_t(x_0,t_0)-F\big(-(-\Delta)^{\frac{\sigma}{2}}\phi(x_0,t_0)\big)-f(x_0,t_0), &
            t_0>0, \\[0.2cm]
            \min\Big\{\phi(x_0,0)-(u_0)^*(x),  \phi_t(x_0,0)-F\big(-(-\Delta)^{\frac{\sigma}{2}}\phi(x_0,0)\big)-f(x_0,0)\Big\}, & 
            t_0=0.
        \end{cases}
    \end{align*}
    and 
       \begin{align*}
        &\limsup_{(h,t,x,\xi) \rightarrow (0,t_0, x_0, 0)} S(h,x,t, \phi(x,t)+\xi-\eta_h, \phi+\xi)\leq \\
        &\begin{cases}
            \phi_t(x_0,t_0)-F\big(-(-\Delta)^{\frac{\sigma}{2}}\phi(x_0,t_0)\big)-f(x_0,t_0), &
            t_0>0, \\[0.2cm]
            \max\Big\{\phi(x_0,0)-(u_0)_*(x),  \phi_t(x_0,0)-F\big(-(-\Delta)^{\frac{\sigma}{2}}\phi(x_0,0)\big)-f(x_0,0)\Big\}, & 
            t_0=0,
        \end{cases}
    \end{align*}
\end{lem}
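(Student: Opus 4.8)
The plan is to verify the Barles--Souganidis consistency condition by directly analyzing the expression $S(h,x,t,\phi(x,t)+\xi-\eta_h,\phi+\xi)$ using the explicit form of $S$ from \eqref{S_definition}. The key preliminary observation is that the additive constant $\xi$ cancels: since $-(-\Delta_h)^{\frac\sigma2}$ annihilates constants (its weights satisfy $\sum_{\bj}\kappa_{\sigma,h,\bj}(\text{const})=0$ because it is a difference operator), we have $-(-\Delta_h)^{\frac\sigma2}(\phi+\xi)=-(-\Delta_h)^{\frac\sigma2}\phi$. Thus for $t\geq\tau$,
\begin{align*}
    S(h,x,t,\phi(x,t)+\xi-\eta_h,\phi+\xi)
    =\ &\frac{\phi(x,t)-\phi(x,t-\tau)}{\tau}-\frac{\eta_h}{\tau}\\
    &-F\big(-(-\Delta_h)^{\tfrac\sigma2}\phi(x,t-\tau)\big)-f(x,t-\tau).
\end{align*}

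First I would treat the interior case $t_0>0$. Here the plan is to pass to the limit $(h,t,x,\xi)\to(0,t_0,x_0,0)$ term by term. The difference quotient $\frac{\phi(x,t)-\phi(x,t-\tau)}{\tau}\to\phi_t(x_0,t_0)$ since $\phi\in C^2$ and $\tau\to0$; the penalization term $\frac{\eta_h}{\tau}\to0$ by hypothesis; and $f(x,t-\tau)\to f(x_0,t_0)$ by continuity of $f$. The only genuinely nontrivial term is $F\big(-(-\Delta_h)^{\frac\sigma2}\phi(x,t-\tau)\big)$. Here I would invoke the second-order truncation estimate of Lemma \ref{lem:trun_err_fraclap} to argue that $-(-\Delta_h)^{\frac\sigma2}\phi(x,t-\tau)\to-(-\Delta)^{\frac\sigma2}\phi(x_0,t_0)$, then use continuity (indeed Lipschitz continuity via \ref{F1}) of $F$ to conclude $F\big(-(-\Delta_h)^{\frac\sigma2}\phi\big)\to F\big(-(-\Delta)^{\frac\sigma2}\phi\big)$. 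Since the limit exists, both the $\liminf$ and $\limsup$ equal the interior expression, giving the two inequalities (in fact equalities) in the $t_0>0$ case.

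The main obstacle is the boundary case $t_0=0$, where the scheme switches definition across $t=\tau$ and one must produce the $\min$ (respectively $\max$) with the initial-data term. Here the approach is to split according to whether the approaching times $t$ lie in $[0,\tau)$ or in $[\tau,T]$. For sequences with $t<\tau$, $S$ reduces to $\phi(x,t)+\xi-\eta_h-u_0(x)$, whose $\liminf$ is bounded below using $\liminf u_0(x)\le(u_0)^*(x_0)$ is not quite right --- rather, since we subtract $u_0(x)$, the relevant one-sided control is $-u_0(x)\geq-(u_0)^*(x_0)+o(1)$ along the $\liminf$, yielding the term $\phi(x_0,0)-(u_0)^*(x_0)$. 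For sequences with $t\geq\tau$, the analysis of the interior case applies and produces the PDE residual at $(x_0,0)$. Since the global $\liminf$ is the minimum over all approaching sequences, the lower bound is the $\min$ of these two quantities; the $\limsup$ is handled symmetrically using $(u_0)_*$ and gives the $\max$. The delicate point is handling the semicontinuous envelopes of $u_0$ correctly, accounting for the fact that $x\to x_0$ along the boundary sequences while simultaneously $h\to0$, so that the grid-evaluation of $u_0$ must be controlled by its upper/lower envelope rather than its pointwise value; I would keep the argument clean by noting that $\eta_h\geq0$ only helps the lower bound and $\xi\to0$ contributes nothing in the limit.
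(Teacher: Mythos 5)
Your proposal follows essentially the same route as the paper's proof: cancellation of the constant $\xi$ by the difference operator, term-by-term passage to the limit when $t_0>0$, and for $t_0=0$ a split of the approaching sequences according to whether $t\in[0,\tau)$ or $t\in[\tau,T]$, with the global $\liminf$/$\limsup$ controlled by the $\min$/$\max$ of the two resulting limits and the envelopes $(u_0)^*$, $(u_0)_*$ entering exactly as you describe.

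The one step that does not hold as written is your use of Lemma \ref{lem:trun_err_fraclap} to conclude $-(-\Delta_h)^{\frac{\sigma}{2}}\phi(x,t-\tau)\to-(-\Delta)^{\frac{\sigma}{2}}\phi(x_0,t_0)$. That lemma requires $\phi\in C_b^4(\R^N)$ and its bound involves $\|D^4\phi\|_\infty$, whereas the test function in this lemma is only assumed to lie in $C^2\cap C_b(\overline Q_T)$, so the estimate is vacuous for such $\phi$ (its right-hand side may be infinite). The paper fills this hole explicitly: it notes that an approximation argument is needed because $\phi\not\in C_b^4$ --- mollify $\phi$, apply the truncation bound to the mollification, and control the error committed on both the discrete and continuous operators through the local $C^2$-regularity and global boundedness of $\phi$, uniformly in $h$; alternatively one can appeal to the consistency result for less regular functions in \cite[Theorem 1.7]{Ciaurri-Stinga-02}. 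Note also that since $x\to x_0$ and $t\to t_0$ simultaneously with $h\to 0$, pointwise consistency at a fixed point is not enough: one needs locally uniform convergence of $-(-\Delta_h)^{\frac{\sigma}{2}}\phi$ together with continuity of $(x,t)\mapsto-(-\Delta)^{\frac{\sigma}{2}}\phi(x,t)$, which is precisely what the uniform character of the mollification/truncation bounds provides. With that repair, the rest of your argument matches the paper's proof.
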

\begin{proof}
    The proof is similar to the proof of \cite[Lemma 5.5]{DJ24}, and we only to the $\limsup$-case since the $\liminf$-case is similar. First note that by Lemma \ref{eus} (and an approximation argument since $\phi\not\in C_b^4$), $(-\Delta_h)^{\frac{\sigma}{2}}\phi \rightarrow (-\Delta)^{\frac{\sigma}{2}}\phi $ locally uniformly as $h \rightarrow 0^+$ (see also \cite[Theorem 1.7]{Ciaurri-Stinga-02}). 
    Assume $(h,t,x,\xi)\rightarrow(0,t_0,x_0,0)$ and consider
    \begin{align*}
        &I:=S(h,x,t, \phi(x,t)+\xi+\eta_h, \phi+\xi).
    \end{align*}
    
    We start with the case $t_0 \in(0,T)$. For small enough $h$, both $t_0, t_0-\tau \in (\tau, T)$, so by \eqref{S_definition} $I$ becomes
    \begin{align*}
 &\tfrac{\phi(x,t)+\xi+\eta_h-(\phi(x,t-\tau)+\xi)}{\tau}-F\big(-(-\Delta_h)^{\frac{\sigma}{2}}(\phi(x,t-\tau)+\xi) \big)-f(x,t-\tau)\\[3pt]
        &\quad=\tfrac{\phi(x,t)-\phi(x,t-\tau)}{\tau}+\tfrac{\eta_h}{\tau}-F\big(-(-\Delta_h)^{\frac{\sigma}{2}}\phi(x,t-\tau) \big)-f(x,t-\tau).
    \end{align*}
Under our assumptions, we then find that
    \begin{align*}
        I\xrightarrow{h\rightarrow 0} \del_t\phi(x,t)-F\big(-(-\Delta)^{\frac{\sigma}{2}}\phi(x,t)\big)-f(x,t),
    \end{align*}
    and then that 
    \begin{align*}
     \lim_{h\to0} I\xrightarrow{(t,x,\xi)\rightarrow(t_0,x_0,0)} \del_t\phi(x_0,t_0)-F\big(-(-\Delta)^{\frac{\sigma}{2}}\phi(x_0,t_0)\big)-f(x_0,t_0).
    \end{align*}
    The result is in fact independent of the order of the limits.
    
    Now let $t_0=0$. Then $t$ can approach $t_0$ by points from either $[\tau, T)$ or $[0,\tau)$. Assume first $t \rightarrow t_0$ from $[\tau, T)$. As above we then find that
    \begin{align*}
\lim_{(h,t,x,\xi)\rightarrow(0,t_0,x_0,0)} I= \del_t\phi(x_0,0)-F\big(-(-\Delta)^{\frac{\sigma}{2}}\phi(x_0,0)\big)-f(x_0,0).
    \end{align*}
    If $t \rightarrow t_0$ from $[0,\tau)$, we instead find that 
    \begin{align*}  \limsup_{(h,t,x,\xi)\rightarrow(0,t_0,x_0,0)} I&= \limsup_{(h,t,x,\xi)\rightarrow(0,t_0,x_0,0)} \phi(x,t)+\xi+\eta_h-u_0(x) \\[3pt]
        &= \phi(x_0,0)-(u_0)_*(x_0).
    \end{align*}
Hence $\limsup_{(h,t,x,\xi)\rightarrow(0,t_0,x_0,0)} I$ is bounded above by the maximum of the two cases and the result follows.
\end{proof}

The main result of the paper shows that the scheme is convergent.
\begin{thm}[Convergence]
Assume \ref{F1}, \ref{F3}, CFL condition \eqref{CFL_condition}, and $u$ and $U_h$ solve \eqref{eq::simple_parabolic_eq} and \eqref{eq::scheme_abstract_form} respectively. 
    Then $U_h\to u$ locally uniformly as $h \rightarrow 0$.
\end{thm}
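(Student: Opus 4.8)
The plan is to invoke the Barles--Perthame--Souganidis method of half-relaxed limits \cite{BS91}, for which all the ingredients have now been assembled: $L^\infty$-stability (Lemma \ref{stab}), monotonicity in the Barles--Souganidis sense (Lemma \ref{mono}), consistency (Lemma \ref{thm:BS-consist}), and the strong comparison principle (Proposition \ref{result:welposed-visco}(i)). First I would introduce the upper and lower half-relaxed limits
\begin{align*}
\overline u(x,t)=\limsup_{(h,y,s)\to(0,x,t)}U_h(y,s),\qquad \underline u(x,t)=\liminf_{(h,y,s)\to(0,x,t)}U_h(y,s).
\end{align*}
By Lemma \ref{stab} the family $\{U_h\}$ is uniformly bounded on $\overline Q_T$, so $\overline u$ and $\underline u$ are finite; moreover $\overline u$ is upper semicontinuous, $\underline u$ is lower semicontinuous, and $\underline u\le\overline u$ by construction.

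The heart of the argument is to show that $\overline u$ is a bounded viscosity subsolution and $\underline u$ a bounded viscosity supersolution of \eqref{eq::simple_parabolic_eq}. I would carry this out for $\overline u$, the case of $\underline u$ being symmetric. Let $\phi\in C^2\cap C_b(\overline Q_T)$ be such that $\overline u-\phi$ attains a strict global maximum at $(x_0,t_0)$, normalized so that $\overline u(x_0,t_0)=\phi(x_0,t_0)$. A standard argument then produces $h_n\to0$ and grid points $(x_n,t_n)\to(x_0,t_0)$ at which $U_{h_n}-\phi$ nearly attains its finite supremum and $U_{h_n}(x_n,t_n)\to\overline u(x_0,t_0)$. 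Writing $\xi_n:=U_{h_n}(x_n,t_n)-\phi(x_n,t_n)\to0$, near-maximality gives $U_{h_n}\le\phi+\xi_n+\eta_{h_n}$ for a suitable $\eta_{h_n}\ge0$ with $\eta_{h_n}/\tau\to0$. Since $S$ is nonincreasing in its last argument (Lemma \ref{mono}) and $S(h_n,x_n,t_n,U_{h_n}(x_n,t_n),U_{h_n})=0$, we deduce
\begin{align*}
0\ \ge\ S\big(h_n,x_n,t_n,\ \phi(x_n,t_n)+(\xi_n+\eta_{h_n})-\eta_{h_n},\ \phi+(\xi_n+\eta_{h_n})\big),
\end{align*}
which is precisely the expression appearing in Lemma \ref{thm:BS-consist} with shift parameter $\xi_n+\eta_{h_n}\to0$. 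Passing to the liminf and applying that lemma yields the subsolution inequality at $(x_0,t_0)$ when $t_0>0$, while the $t_0=0$ branch of the same lemma delivers the relaxed initial condition; since $u_0\in C_b$ by \ref{F3}, the latter reduces to $\overline u(\cdot,0)\le u_0$. The symmetric computation shows $\underline u$ is a supersolution with $\underline u(\cdot,0)\ge u_0$.

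Finally I would close the argument by comparison. Since $\overline u$ is a bounded $\text{USC}$ subsolution and $\underline u$ a bounded $\text{LSC}$ supersolution of \eqref{eq::simple_parabolic_eq}, Proposition \ref{result:welposed-visco}(i) gives $\overline u\le\underline u$ on $\rn\times[0,T]$. Combined with $\underline u\le\overline u$ this forces $\overline u=\underline u=:u$, so this common value is continuous and, by Proposition \ref{result:welposed-visco}(ii), is the unique viscosity solution of \eqref{eq::simple_parabolic_eq}. The equality of the two half-relaxed limits to a single continuous function is exactly equivalent to locally uniform convergence $U_h\to u$ as $h\to0$, which is the claim.

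I expect the main obstacle to be the sub/supersolution verification in the second step: the careful extraction of near-maximizers along the varying grids, the correct bookkeeping of the perturbation $\eta_{h_n}$ and of the time-shift $t-\tau$ built into $S$, and the treatment of the initial layer at $t_0=0$. These delicate points are, however, exactly what the consistency Lemma \ref{thm:BS-consist} has been tailored to absorb, so once its hypotheses are matched the limit passage becomes routine.
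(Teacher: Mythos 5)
Your proposal is correct and follows essentially the same route as the paper: the Barles--Perthame--Souganidis half-relaxed limits, with $L^\infty$-stability (Lemma \ref{stab}), BS-monotonicity (Lemma \ref{mono}), BS-consistency (Lemma \ref{thm:BS-consist}), and the strong comparison principle (Proposition \ref{prop::comparison_existence_stability}(i)) combined in the same order. The only difference is cosmetic: you spell out the sub/supersolution verification (near-maximizers, the $\xi_n+\eta_{h_n}$ bookkeeping) that the paper delegates to the cited proof of \cite[Theorem 5.6]{DJ24}, and the reduction of the relaxed initial condition to the classical one, which the paper handles in a remark via \cite{Ba97}.
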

\begin{proof}
In view of our previous results, 
the convergence follows in a standard way from the Barles-Perthame-Souganidis method of half-relaxed limits \cite{BS91}. We sketch the proof, starting by introducing the ``half-relaxed limits" of $U_h$:
    \begin{align*}
        \overline{u}(x,t) = \limsup_{(y,s,h) \rightarrow (x,t,0^+)}U_h(y,s) \hspace{5mm}\text{and}\hspace{5mm} \underline{u}(x,t) = \liminf_{(y,s,h) \rightarrow (x,t,0^+)}U_h(y,s).
    \end{align*}
    By BS stability (Lemma \ref{stab})  $\overline{u}$ and $\underline{u}$ are finite and bounded, and then by stability of viscosity solutions and BS monotonicity and consistency (Lemmas \ref{mono} and \ref{thm:BS-consist}), $\overline{u}$ and $\underline{u}$ are sub- and supersolutions of \eqref{eq::simple_parabolic_eq} respectively. See the proof of \cite[Theorem 5.6]{DJ24} for a detailed proof in a case that is close to ours. By the strong comparison principle for equation \eqref{eq::simple_parabolic_eq} ({\cb Proposition} \ref{prop::comparison_existence_stability} $(i)$) we have that $\overline{u}\leq \underline{u}$. By definition, $\overline{u}\geq \underline{u}$, and hence the limsup and liminf are equal and the limit exists:
    \begin{align*}
        \lim_{(y,s,h) \rightarrow (x,t,0^+)}U_h(y,s) = \overline{u}(x,t) = \underline{u}(x,t) := u(x,t),
    \end{align*}
    and the limit $u$ is continuous and a viscosity solution of \eqref{eq::simple_parabolic_eq} (unique by {\cb Proposition} \ref{prop::comparison_existence_stability} $(ii)$).  Furthermore, $\limsup_{h \rightarrow 0^+}U_h \leq \overline{u} = \underline{u} \leq \liminf_{h \rightarrow 0^+}U_h$, so $U_h \rightarrow u$ pointwise as $h \rightarrow 0$. Local uniform convergence follows by the definitions of $\overline{u}$ and $\underline{u}$, see e.g. \cite[Chapter 5, Theorem 1.9]{Bardi-BOOK}.
\end{proof}

\cb
\begin{rem}
In viscosity solution theory,  initial and boundary conditions can be interpreted in a pointwise classical sense or in a generalized sense allowing for loss of boundary conditions and boundary discontinuities. The first is used in \cite{CJ17} and Proposition \ref{prop::comparison_existence_stability}, while the second is needed for the method of half-relaxed limits \cite{BS91}. However, for parabolic problems the two notions of initial conditions coincide when the initial data is continuous. Then $\R\times\{0\}$ is regular in the sense that the initial condition is continuously attained in every point. See e.g. \cite[Theorem 4.7]{Ba97} (or \cite[Lemma 4.2]{DJ24} for a nonlocal case), the proofs are essentially the same.
\end{rem}

Under our assumptions solutions of \eqref{eq::simple_parabolic_eq} need only be continuous (the problem may degenerate). Under stronger assumptions, the solution may be more regular and then it is possible to estimate also the convergence rate of the scheme. We refer to \cite{CJ23} for many results in this direction and a discussion  covering several special cases of problems \eqref{eq::simple_parabolic_eq} and \eqref{eqn:main}. Even if convergence rates are not a main focus of this paper, we will give one result showing that in the best possible case, our scheme is indeed 2nd order in space and first order time.

{\cb 
\begin{thm}[Convergence rate]
\label{thm::scheme_convergence_rate}
Assume \ref{F1}, \ref{F3}, $u \in C_b([0,T];C_b^{4}(\R^N))\cap C^2_b(0,T;C_b(\R^N))$, CFL condition \eqref{CFL_condition}, and $u$ and $U$ solve \eqref{eq::simple_parabolic_eq} and \eqref{scheme::simple_parabolic_eq}. Then
\begin{equation}
\begin{aligned}
    &\sup_{\bi\in \Z^N}|U_{\bi}^n - u(x_{\bi},t_n)| \\
    &\leq t_n L_F\Big(c_{N,\sigma,1}\|D^4 u(\cdot, t_n) \|_{L^\infty}+ c_{N,\sigma,2}\|u(\cdot, t_n) \|_{L^\infty}\Big)h^2 + t_n\| u_{tt}  \|_{L^\infty}\tau. 
\end{aligned}
\label{eq::scheme_convergence_rate}
\end{equation}
\end{thm}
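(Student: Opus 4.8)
The plan is to treat the exact solution, sampled on the grid, as an approximate solution of the scheme \eqref{scheme::simple_parabolic_eq}, to bound the resulting one-step consistency residual, and then to convert this into a global error bound via the comparison principle of Theorem \ref{comp_S}. This mirrors the standard argument for monotone, stable, consistent schemes, and every ingredient is already in hand.

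First I would Taylor expand in time: for some $\theta_n\in(t_n,t_{n+1})$,
\[
u(x_{\bi},t_{n+1})=u(x_{\bi},t_n)+\tau\, u_t(x_{\bi},t_n)+\tfrac{\tau^2}{2}u_{tt}(x_{\bi},\theta_n),
\]
and replace $u_t$ using \eqref{eq::simple_parabolic_eq}, namely $u_t=F(-(-\Delta)^{\frac\sigma2}u)+f$. Swapping the continuous operator for the discrete one costs, by \ref{F1} (so that $F$ is $L_F$-Lipschitz) and the truncation estimate of Lemma \ref{lem:trun_err_fraclap},
\[
\big|F(-(-\Delta)^{\frac\sigma2}u(x_{\bi},t_n))-F(-(-\Delta_h)^{\frac\sigma2}u(x_{\bi},t_n))\big|\le L_F\tfrac N{10}h^2\Big(\|D^4u(\cdot,t_n)\|_{L^\infty}+\tfrac{2-\sigma}{2+\sigma}c_N\|u(\cdot,t_n)\|_{L^\infty}\Big).
\]
Hence $u(x_{\bi},t_n)$ satisfies the scheme up to a residual $R_{\bi}^n$ with $|R_{\bi}^n|\le r^n$, where
\[
r^n:=\tau L_F\tfrac N{10}h^2\Big(\|D^4u(\cdot,t_n)\|_{L^\infty}+\tfrac{2-\sigma}{2+\sigma}c_N\|u(\cdot,t_n)\|_{L^\infty}\Big)+\tfrac{\tau^2}{2}\|u_{tt}\|_{L^\infty}.
\]

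Next I would build space-constant barriers. Set $\rho_0=0$ and $\rho_{n+1}=\rho_n+r^n$, and consider $\bar U_{\bi}^n:=u(x_{\bi},t_n)+\rho_n$ and $\underline U_{\bi}^n:=u(x_{\bi},t_n)-\rho_n$. The key observation is that $-(-\Delta_h)^{\frac\sigma2}$ annihilates the constant $\rho_n$, so $F$ is evaluated at exactly the same argument as for $u$; consequently one step of the scheme applied to $\bar U$ produces $u(x_{\bi},t_{n+1})-R_{\bi}^n+\rho_n$, and the choice $\rho_{n+1}-\rho_n=r^n\ge|R_{\bi}^n|$ makes $\bar U$ a supersolution of \eqref{scheme::simple_parabolic_eq} (and symmetrically $\underline U$ a subsolution). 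Since $\bar U_{\bi}^0=\underline U_{\bi}^0=u_0(x_{\bi})=U_{\bi}^0$, the comparison principle (Theorem \ref{comp_S}, valid under \eqref{CFL_condition}) gives $\underline U_{\bi}^n\le U_{\bi}^n\le\bar U_{\bi}^n$, that is $|U_{\bi}^n-u(x_{\bi},t_n)|\le\rho_n=\sum_{k=0}^{n-1}r^k$.

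Finally I would sum the per-step residuals: the $n$ factors of $\tau$ combine with $n=t_n/\tau$ to produce the prefactor $t_n$, the spatial part giving $t_nL_F(c_{N,\sigma,1}\|D^4u(\cdot,t_n)\|_{L^\infty}+c_{N,\sigma,2}\|u(\cdot,t_n)\|_{L^\infty})h^2$ with $c_{N,\sigma,1},c_{N,\sigma,2}$ collecting $\tfrac N{10}$ and the $\sigma$-factors, and the temporal part giving $t_n\|u_{tt}\|_{L^\infty}\tau$. I expect no single hard step here: the comparison principle is already proved, and the regularity hypothesis $u\in C_b([0,T];C_b^4(\R^N))\cap C^2_b(0,T;C_b(\R^N))$ is tailored precisely to supply the space truncation (Lemma \ref{lem:trun_err_fraclap}, which needs $C_b^4$) and the time Taylor remainder (which needs bounded $u_{tt}$). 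The only genuine care is bookkeeping, namely that the accumulated time-slice norms $\|D^4u(\cdot,t_k)\|_{L^\infty}$ for $k<n$ are controlled by the norm appearing in the statement (read as a supremum over $[0,t_n]$, or dominated by the value at $t_n$); and the constant-annihilation observation is what allows the barrier comparison to go through despite the nonlinearity of $F$.
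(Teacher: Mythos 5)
Your proposal is correct and takes essentially the same route as the paper's proof: treat the exact solution as an approximate solution of the scheme, bound the one-step residual by splitting it into a spatial part (\ref{F1} plus Lemma \ref{lem:trun_err_fraclap}) and a temporal Taylor part, and conclude by sandwiching $U$ between shifted copies of $u$ via the discrete comparison principle (Theorem \ref{comp_S}). The paper's barriers $w^{\pm}=u\pm t(A+B)$ are just the continuous-in-time version of your accumulated residual $\rho_n=\sum_{k<n}r^k$, and both arguments hinge on the same observation that spatially constant shifts are annihilated by $-(-\Delta_h)^{\sigma/2}$, so the nonlinearity $F$ causes no trouble.
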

\begin{rem}
\label{rem::convergence_rate_remark}

(a) The second order rate in $h$ is confirmed by our numerical experiments, see Section \ref{sec::example_4}. In our computations $\tau \leq O(h^2)$ is required to reach this rate as indicated by \eqref{eq::scheme_convergence_rate}. The scheme  still works under the milder CFL-condition   $\tau \leq O(h^\sigma)$ (cf. \eqref{CFL_condition}), but then the convergence is slower. 
  \smallskip

\noindent (b) \ Can $u$ ever be so smooth as in Theorem \ref{thm::scheme_convergence_rate}? The answer is yes if e.g. $F, f, u_0$ are smooth enough and $F'\geq c>0$ (strict ellipticity). A more precise discussion on this can be found in e.g. \cite{CJ23}.
\end{rem}
\begin{proof}
Let $u^n:=u(x,t_n)$, 
$u_{\bi}^n:=u(x_{\bi},t_n)$, 
etc. Evaluating \eqref{eq::simple_parabolic_eq} at time $t_n$ and adding and subtracting terms, we find that 
\begin{equation}
\begin{aligned}
    &\frac{u^{n+1}-u^{n}}{\tau} -  F(-(-\Delta_h)^{\sigma/2}u^n) \\
    &\qquad = f^n  +\underbrace{F(-(-\Delta)^{\sigma/2}u^n) - F(-(-\Delta_h)^{\sigma/2}u^n)}_{=:a}+\underbrace{\frac{u^{n+1}-u^{n}}{\tau}-u_t^n}_{=:b}.
\end{aligned}
\label{eq::convergence_order_calculation}
\end{equation}
Note that by \ref{F1} and Theorem \ref{lem:trun_err_fraclap}, and by Taylor expansion in time,
\begin{align*}
    |a | &\leq  L_F\big(c_{N,\sigma,1}\|D^4 u(\cdot, t_n) \|_{L^\infty}+ c_{N,\sigma,2}\|u(\cdot, t_n) \|_{L^\infty} \big)h^2 =: A \nonumber
    ,\\[0.2cm]
    |b|& \leq \| u_{tt} \|_{L^\infty} \tau =: B.
\end{align*}
Define now 
    $w^{\pm} := u \pm t(A + B )$. By \eqref{eq::convergence_order_calculation}, $w^{\pm}$ is a super/sub-solution of \eqref{scheme::simple_parabolic_eq}:
\begin{align*}
    &\frac{w^{\pm}(x_{\bi},t_{n+1})-w^{\pm}(x_{\bi},t_{n})}{\tau} - F(-(-\Delta_h)^{\sigma/2}w^{\pm}(x_{\bi},t_{n})) \\
    &= \frac{u_{\bi}^{n+1}-u_{\bi}^{n} }{\tau} - F(-(-\Delta_h)^{\sigma/2}u_{\bi}^{n}) +\frac{t_{n+1}-t_n}{\tau}(\pm (A+B)) \\ 
    &= f_{\bi}^n+ (a \pm A) + (b \pm B)\begin{array}{c}\geq \\ \leq\end{array}f_{\bi}^n.
\end{align*}
By the discrete comparison principle Theorem \ref{comp_S}, we then have that 
\begin{align*}
    w^-(x_{\bi},t_n) \leq U^n_{\bi} \leq w^+(x_{\bi},t_n).
\end{align*}
Rearranging and taking the max over $i\in \Z^N$ gives us \eqref{eq::scheme_convergence_rate}.
\end{proof}
}

{\cb 
\section{Numerical experiments}
\label{sec::num_exp}
In the following numerical experiments we will solve the scheme \eqref{scheme::simple_parabolic_eq} with $f = 0$, 
\begin{align}
\begin{aligned}
\label{eq::num_exp_scheme}
    U_i^{n+1}&=U_i^n+\tau F_m(-(-\Delta_h)^{\frac{\sigma}{2}}U_i^n), && m=1,2,3,\\
    U^0_i &= g_k(x_i),&& k=1,2,3,
\end{aligned}
\end{align}
where $(F_1, F_2, F_3)$ are 
\begin{align*}
     F_1(l) = \max(0,l), \qquad F_2(l)  = \max\bigg(\frac{1}{2}l,l\bigg), \qquad F_3(l) = l.
\end{align*}
Here $F_1, F_2$ are non-smooth, nonlinear and Lipschitz, $F_3$ is linear, smooth and Lipschitz, and $F_1$ is strongly degenerate, the others non-degenerate.
We consider 
$C^{1,1}$, non-smooth (Lipschitz), and $C^{\infty}$ initial conditions:
\begin{align*}
g_1(x)&= \begin{cases}
\frac34\sin(\pi(x+\frac32)) - \frac12\sin(\frac{\pi}2(x+1)) +\frac{1}4, & x\in(-2,2),\\[0.2cm]
0, & x\not\in(-2,2),
\end{cases}\\
g_2(x)&= \begin{cases}
2-|x|, & x \in (-2,-1]\cup [1,2), \\
2|x|-1, & x\in (-1,1), \\
0, & x\not\in(-2,2),
\end{cases}\\
g_3(x) &= \frac{1}{1+x^2}.
\end{align*}
\vspace{-0.3cm}
\begin{figure}[h]
    \centering
\includegraphics[width=1.0\textwidth]{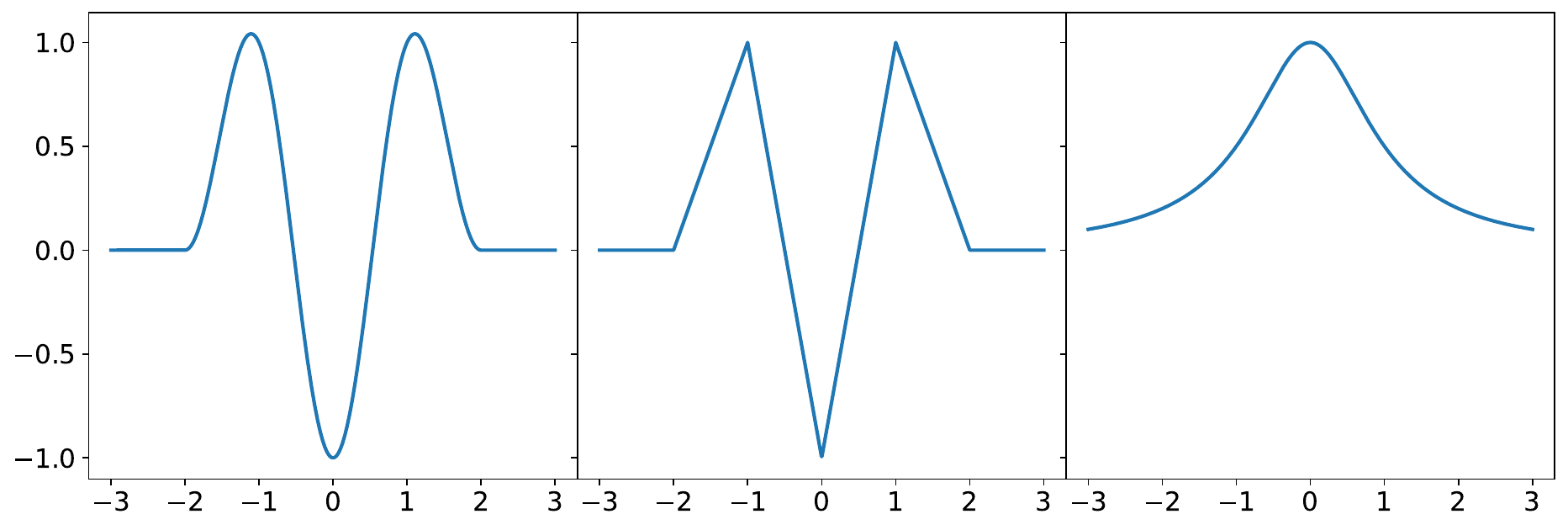}
    \caption{{\cb  Initial conditions $g_1$ (left), $g_2$ (middle), and $g_3$ (right).}}
    \label{fig::IC_fig}
\end{figure}

}

{\cb 
In all experiments we consider 1d fractional Laplacians, since in dimension $N=1$ the 
formula for the weights simplifies to (cf. \cite[Theorem 1.1 a)]{Ciaurri-Stinga-02}):
\begin{align}
    \kappa_{\sigma, h,j}= \frac{1}{h^\sigma}\frac{2^\sigma \Gamma(\frac{1+\sigma}{2})\Gamma(|j|-\tfrac{\sigma}{2})}{\sqrt{\pi}|\Gamma(-\frac{\sigma}{2})|\Gamma(|j|+1+\tfrac{\sigma}{2})}.
    \label{eq::weights_1d}
\end{align}
We can then compute the  fractional Laplacian as (cf. Lemma \ref{lem::discr_fraclap_quadrature})
\begin{align}
    -(-\Delta_h)^{\frac{\sigma}{2}}U_i^n = \sum_{j \in \mathbb{Z}\backslash \{i\}}\kappa_{\sigma, h,i-j}U_j^n - U_i^n\sum_{j \in \mathbb{Z}\backslash \{0\}}\kappa_{\sigma, h, j},
    \label{eq::num_exp_frac_lap}
\end{align}
truncating the sums at large $j$ to get finite sums.
However, without proper handling of the ratio of gamma-functions, this approach leads to poor numerical convergence.
A clever solution using asymptotic approximations and the $\zeta$-function is described by U. Unneberg in \cite[Appendix A]{ulrik_master_thesis}:
\begin{align}
    (-\Delta_h)^{\frac{\sigma}{2}}U_j \approx \mathcal L_{R, \sigma}[U]_j+\mathcal L^R_\sigma[U]_j, \qquad j\in \Z,
    \label{eq::ulrik_implementation}
\end{align}
where 
\begin{align*}
    \mathcal L_{R, \sigma}[U]_j &:= \frac{2^{\sigma}\Gamma(\frac{1+\sigma}{2})}{h^{\sigma}\sqrt{\pi}|\Gamma(-\frac{\sigma}{2})|}\sum_{\substack{|m|\leq R, \\ m\neq 0}} (U_j - U_{j-m}) \frac{\Gamma(|m|-\frac{\sigma}{2})}{\Gamma(|m|+1+\frac{\sigma}{2})}, \\    
    \mathcal L^R_\sigma[U]_j &:= \frac{2^{\sigma}\Gamma(\frac{1+\sigma}{2})}{h^{\sigma}\sqrt{\pi}|\Gamma(-\frac{\sigma}{2})|}2U_j\bigg(\zeta(1+\sigma)-\sum_{m=1}^{R-1}\frac{1}{m^{1+\sigma}} \bigg).
\end{align*}
We also use the (standard) trick of writing $\frac{\Gamma(|m|-\frac{\sigma}{2})}{\Gamma(|m|+1+\frac{\sigma}{2})}=e^{\ln(\Gamma(|m|-\frac{\sigma}{2}))-\ln(\Gamma(|m|+1+\frac{\sigma}{2}))}$, because we can then use larger $R$ before encountering $\infty/\infty$-type errors. When $\sigma=1$ we use that the ratio  simplifies:  $\frac{\Gamma(|m|-\frac{1}{2})}{\Gamma(|m|+1+\frac{1}{2})}=\frac{1}{(|m|+\frac{1}{2})(|m|-\frac{1}{2})}$.

To get a finite computational domain $I$, we truncate the domain,
introducing artificial $0$-Dirichlet exterior conditions on $I^c$.
}
\cb The relative $L^\infty$-error of a numerical solution $U$ with respect to 
a reference solution $\Tilde{U}$ is 
\begin{align*}
    \text{Rel. error} = \frac{\|U-\Tilde{U} \|_{L^\infty(I_{\text{error}})}}{\| \Tilde{U}\|_{L^\infty(I_{\text{error}})}},    
\end{align*} 
where 
$I_{\text{error}} \subset I$ is a smaller domain chosen to avoid pollution from the artificial 
exterior conditions.




 \nc
\subsection{Example 1: A degenerate diffusion equation in 1d}\label{sec:ex1} \ 
 Here we solve \eqref{eq::num_exp_scheme} with the degenerate nonlinearity $F_1$, {\cb  with $h=2^{-5}$, $\tau=O(h^{\sigma})$, and $I=[-20,20]$.}
\medskip

\noindent {\bf (1a)} \ We first look at a fixed time ${\cb t}=0.5$, two different initial conditions $g_1,g_2$, and three different values of $\sigma$ ($\frac12, 1, \frac32$), see Figure \ref{fig::exp1a_fig}. Compared to initial conditions (Figure \ref{fig::IC_fig}), we observe that the positive peaks remain fixed while the negative peak has diffused/traveled upwards. The solution never becomes smooth {\cb in the $g_2$-case.} 
\begin{rem}
Because of the the nonlinearity $F_1$, the solution can only move/diffuse when the fractional Laplacian is positive and this only happens if the solution is ``convex enough". At the positive peaks this is never the case, and the solution is stuck there. 
\end{rem}
\begin{figure}[h]
    \centering
    \includegraphics[width=1\textwidth]{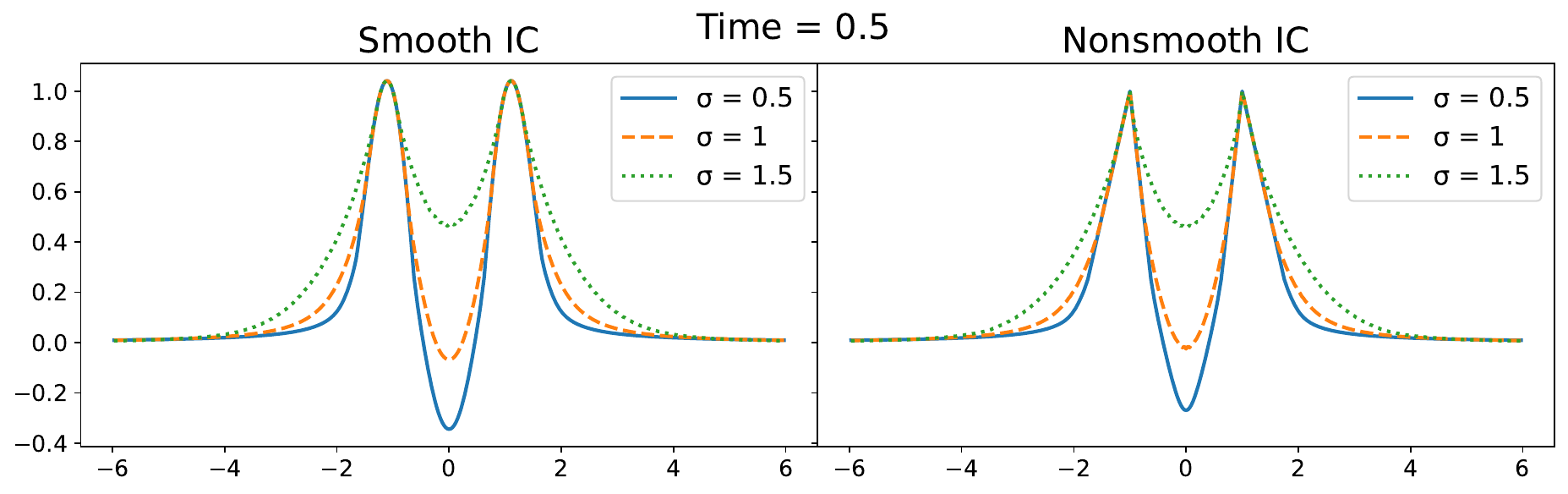}
    \caption{Experiment 1a. 
    %
    }
    \label{fig::exp1a_fig}
\end{figure}




\noindent {\bf (1b)} \ Now we study the time evolution of problem \eqref{eq::num_exp_scheme} with nonlinearity $F_1$ and non-smooth initial data $g_2$. We solve the problem at four different times and for three different values of $\sigma$ ($1,\frac32,2$), see Figure \ref{fig::exp1b_fig}. As expected, we observe the strongest diffusion in the most convex regions. 
\begin{rem}
    Short range diffusion becomes stronger the larger $\sigma$ is, but for long range diffusion it is the opposite. Therefore large $\sigma$ diffuse faster near peaks, but slower far away. This means that solutions in Figure \ref{fig::exp1b_fig} are not ordered, but rather they will reverse order far enough from the peaks.
\end{rem}
\begin{figure}[h]
    \centering
   \includegraphics[width=1\textwidth]{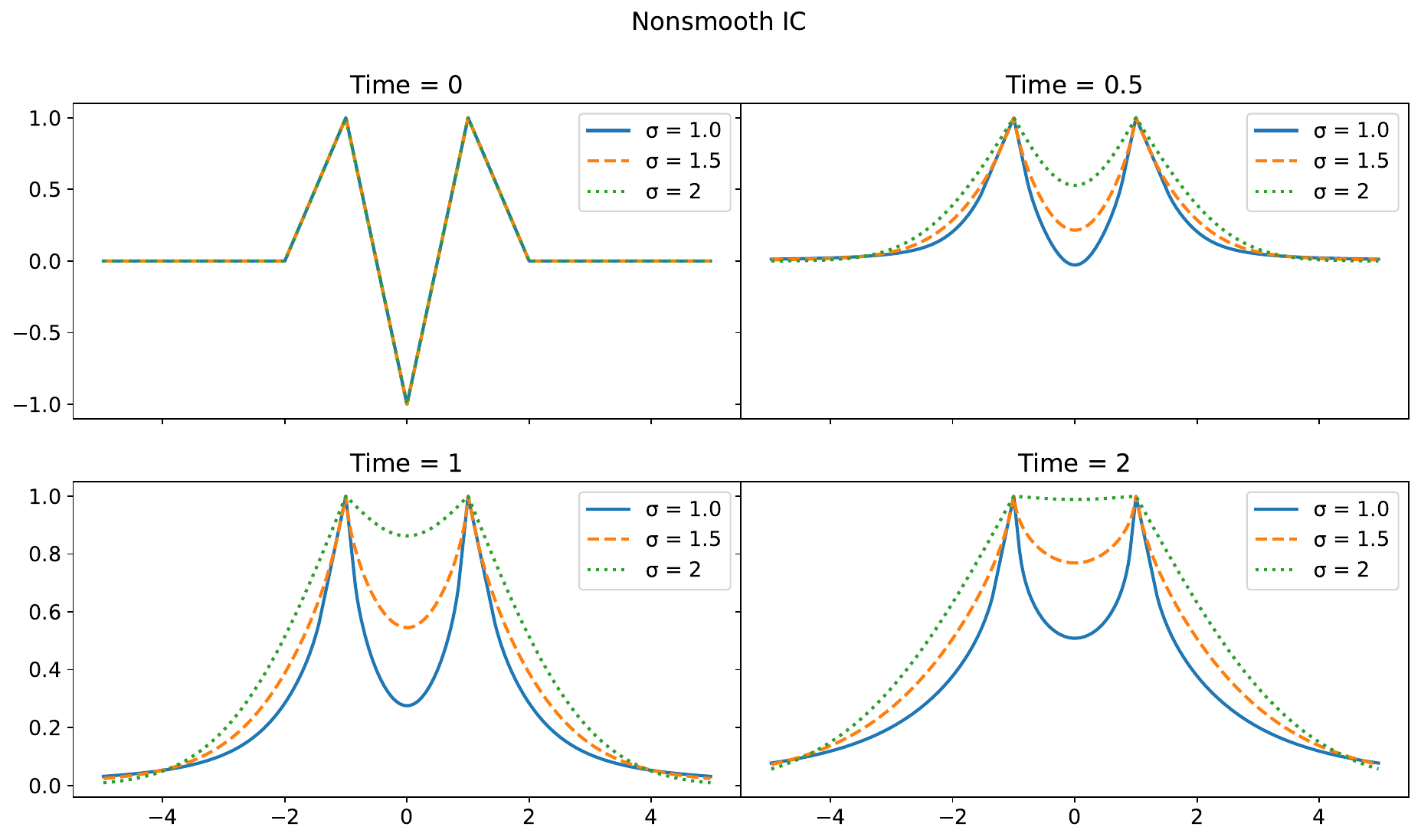}
    \caption{Experiment 1b. 
    }
    \label{fig::exp1b_fig}
\end{figure}








\subsection{Example 2: A degenerate diffusion equation in 2d}
We solve 
\begin{align}
\begin{split}
    U_{i,j}^{n+1}&=U_{i,j}^n+\tau \big[ F_1\big(-(-\Delta_h^x)^{\frac{1}{2}}U_{i,j}^n\big) +F_2\big(-(-\Delta_h^y)^{\frac{1}{2}}U_{i,j}^n\big) \big],\\
    U^0_{i,j} &= g_1\Big(\sqrt{x_i^2 +y_j^2}\Big),
\end{split}
    \label{eq::2d_num_exp_scheme}
\end{align}
where $(-\Delta_h^x)^{\frac{1}{2}}$ and $(-\Delta_h^y)^{\frac{1}{2}}$ denote (1d) $\frac12$-Laplacians in the $x$ and $y$ directions respectively {\cb (see Section \ref{sec:multdiff} below for the generalization to this class of equations)}. The initial condition is the 2d radially symmetric version of $g_1$. {\cb We use $h=2^{-5}$, $\tau=O(h^{{\sigma}})$, and $I=[-20,20]\times[-20, 20]$.} The solution at $t=1$ is shown Figure \ref{fig:fig_3d_layout}.
\begin{figure}[h]
    \centering   \includegraphics[width=\textwidth]{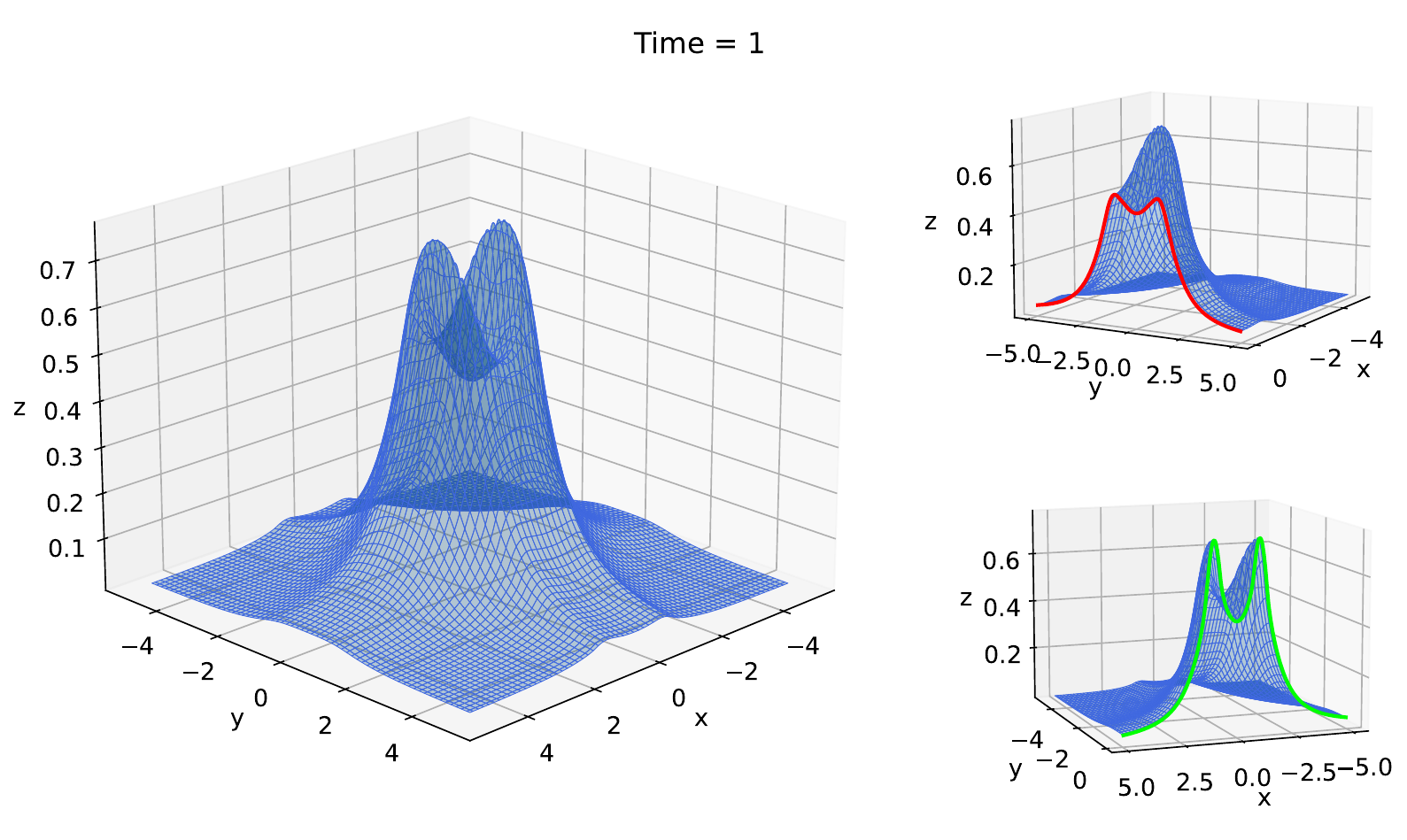}
    \caption{Experiment 2.  
    The figures to the right 
    show the solution for  $x \leq 0$ (top) and $y \leq 0$ (bottom). 
    }
    \label{fig:fig_3d_layout}
\end{figure}

In this example no points are stuck and the solution diffuses in the whole domain. Observe from the red and green curves how the two different nonlinearities in \eqref{eq::2d_num_exp_scheme} cause the solution to diffuse differently in the $x$ and $y$ directions. 

\subsection{Example 3: Limits of solutions as $\sigma\to0$ and $\sigma\to 2$}\
{ \cb  It is well-known that as  $\sigma\to 2^-$ and $\sigma\to 0^+$,  $-(-\Delta)^{\frac{\sigma}2}\to \Delta$  and $-(-\Delta)^{\frac{\sigma}2}\to -Id$,  and the correpsonding solutions of equation \eqref{eq::simple_parabolic_eq} converge to the solutions of their limiting equations \cite{CJ17}.  We study this convergence numerically when 
$F=F_1$ and $g=g_2$, with $h=2^{-5}$, $\tau=O(h^{\sigma})$, $I=[-20,20]$, and $I_{\text{error}} = [-10,10]$. 
The  solutions at $(t=1,\sigma\to0)$ and $(t=1,\sigma\to2)$ 
are shown in Figure \ref{fig:sigma_limit}.
We observe (see Table \ref{tab:sigma_to_2_only}) a linear convergence rate in $L^\infty$ with fixed $h$  for both $\sigma\to 0$ and $\sigma\to2$.
}

\begin{figure}[h]
    \centering
 \includegraphics[width=\textwidth]{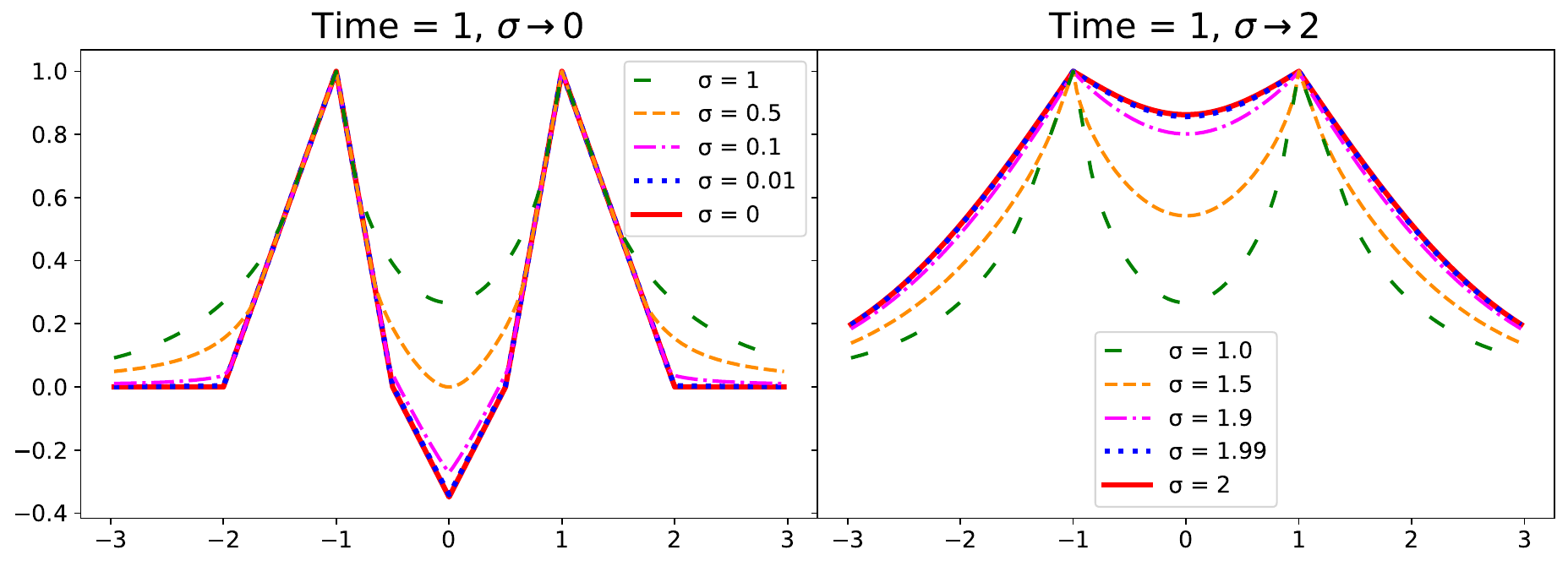}
    \caption{{\cb  Experiment 3. }}
    \label{fig:sigma_limit}
\end{figure}


\begin{table}[h]
    \centering
    \begin{tabular}{|ccc|}
\hline
       $\sigma$ &   Rel. error &       rate \\
\hline
 1.00e-1 &   7.83e-2  & --        \\
 5.00e-2 &   3.98e-2   &   0.98 \\
 2.50e-2 &   1.99e-2  &   1.00  \\
 1.25e-2 &   9.04e-3 &   1.14  \\
 6.25e-3 &   3.61e-3 &   1.32  \\
\hline
\end{tabular}
\hspace{-0.25cm}
\begin{tabular}{|ccc|}
\hline
     $2-\sigma$ &   Rel. error &      rate \\
\hline
 1.00e-1 &   6.8e-2  & --       \\
 5.00e-2 &   2.96e-2  &   1.04 \\
 2.50e-2 &   1.46e-2  &   1.02 \\
 1.25e-2 &   7.24e-3 &   1.01  \\
 6.25e-3 &   3.61e-3 &   1.01 \\
\hline
\end{tabular}
\bigskip
    \caption{ {\cb  Experiment 3, $\sigma\to0$ (left) and $\sigma\to2$ (right). { \cb Relative $L^{\infty}$-errors at time $t=1$ with fixed spatial step size $h=2^{-5}$.}}}
    \label{tab:sigma_to_2_only}
\end{table}




{\cb 
\subsection{Example 4: Convergence rate in $h$}
\label{sec::example_4}
We study the convergence rate as $h \rightarrow 0$ of the scheme \eqref{eq::num_exp_scheme} for $\sigma=1$ and $g=g_3$ on $I=[-5000, 5000]$. In Table \ref{tab:scheme_convergence_table} we list, for different values of $h$, the relative $L^\infty$-errors on $I_{\text{error}}=[-500,500]$ at time $t=1$ .
\medskip

\noindent {\bf (4a)} \ Linear equation with known exact solution. \ We reproduce Example $5.1$ from \cite{EJT18b}, 
solving \eqref{eq::num_exp_scheme} with $F=F_3$. 
In Table \ref{tab:scheme_convergence_table}, the relative $L^\infty$-errors are with respect to the exact solution \cite{EJT18b}
\begin{align}
    \label{eq::linear_eq_exact_sol}
    u(x,t)=\frac{t+1}{(t+1)^2+x^2}
\end{align}
as reference solution. The results are consistent with the $O(h^2+\tau)$ bound of Theorem \ref{thm::scheme_convergence_rate} and the discussion in Remark \ref{rem::convergence_rate_remark}. 
\medskip

\noindent {\bf (4b)} \ Nonlinear equation, numerical reference solution. 
We solve \eqref{eq::num_exp_scheme} with nonlinearity $F=F_2$. In Table \ref{tab:scheme_convergence_table}, the relative $L^\infty$-errors are with respect to the numerical solution with $h=2^{-7}$ as reference solution.

\begin{table}[h]
    \centering
    \begin{tabular}{|c|cc}
    \hline
    & & \\[-0.3cm]
    & $F_3$ (linear) &  $\tau=h$ \\[0.1cm]
\hline \\[-0.4cm]
        h &   Rel. error &       rate \\
\hline \\[-0.3cm]
 $2^{-1}$      &   1.20e-1   & --        \\
 $2^{-2}$     &   6.37e-2  &   0.91 \\
 $2^{-3}$    &   3.17e-2  &   1.01  \\
 $2^{-4}$   &   1.57e-2  &   1.01  \\
 $2^{-5}$  &   7.84e-3   &   1.01  \\
 $2^{-6}$ &   3.91e-3 &   1.00   \\
\hline
\end{tabular}\hspace{-0.12cm}
\begin{tabular}{|cc}
\hline
 & \\[-0.3cm]
$F_3$ (linear) & $\tau=h^2$ \\[0.1cm]
\hline\\[-0.4cm]
    Rel. error &      rate \\
\hline \\[-0.3cm]
   5.91e-2   & --       \\
   1.39e-2   &   2.08 \\
   3.44e-3  &   2.02 \\
   8.56e-4 &   2.01 \\
  2.14e-4 &   2.00 \\
   5.34e-5  &   2.00 \\
\hline
\end{tabular}\hspace{-0.12cm}
    \begin{tabular}{|cc}
    \hline
    & \\[-0.3cm]
$F_2$ (nonlinear) & $\tau=h^2$ \\[0.1cm] 
\hline\\[-0.4cm]
   Rel. error &      rate \\
\hline \\[-0.3cm]
  2.02e-2   & --       \\
  4.77e-3  &   2.08 \\
  1.17e-3   &   2.03 \\
  2.88e-4 &   2.02 \\
  6.85e-5  &   2.07 \\
  1.37e-5 &   2.32 \\
  \hline 
\end{tabular}
\hspace{-0.12cm}\vrule \hspace{-0.23mm}\vrule
\bigskip
    \caption{ {{\cb  Experiment 4a (left, middle) and 4b (right), $\sigma=1$.
    \cb Relative $L^{\infty}$-errors at time $t=1$.}}}
    \label{tab:scheme_convergence_table}
\end{table}
}
\section{Extensions}
\label{sec::extn}

\subsection{On time discretizations}

Different time discretizations can be considered as long as the resulting schemes are monotone. Here we discuss the $\theta$-method and the corresponding scheme for 
\eqref{eq::simple_parabolic_eq}:
\begin{equation}\label{scheme::simple_parabolic_eq-theta}
    U_{\bi}^{n+1} = U_{\bi}^n+\tau \Big[ F\Big(-(1-\theta)(-\Delta_h)^{\frac{\sigma}{2}}U_{\bi}^n - \theta(-\Delta_h)^{\frac{\sigma}{2}}U_{\bi}^{n+1} \Big)+f_{\bi}^n\Big].  
\end{equation}
This scheme is fully implicit when $\theta=1$. It is explicit when $\theta=0$ and then the scheme coincides with \eqref{scheme::simple_parabolic_eq}. For other values, the scheme has both explicit and implicit terms.
The scheme is monotone/$L^\infty$-stable under a modified CFL condition
\begin{align}
    \quad (1-\theta)\tau \leq \frac{1}{L_F C_\sigma} h^{2\sigma},\quad \text{$C_\sigma$ and $L_F$ are given by Lemma \ref{lem::discr_fraclap_quadrature} and \ref{F1}}.
    \label{CFL_condition-theta}
\end{align}
 Comparison, $L^{\infty}$-stability, and consistency follow from similar arguments as for the explicit scheme \eqref{scheme::simple_parabolic_eq}, see Theorems \ref{comp_S}, \ref{eus} and \ref{thm:BS-consist}. Existence is no longer immediate, but follows 
 from a fixed point argument (Banach) for each time step followed by an induction on the time steps -- for a detailed proof see e.g. the arguments for \cite[Theorem 3.1]{BJK1}. Convergence then follows as before:
 \begin{thm}
 Assume \ref{F1}, \ref{F3}, $\theta \in [0,1]$, CFL condition \eqref{CFL_condition:Isaacs}, and $u$ and $U_h$ solve \eqref{eq::simple_parabolic_eq} and \eqref{scheme::simple_parabolic_eq-theta} respectively. Then $U_h \to u$ locally uniformly as $h \rightarrow 0$.
\end{thm}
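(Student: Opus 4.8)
The plan is to reprove convergence by the Barles--Perthame--Souganidis method of half-relaxed limits \cite{BS91}, exactly as in the convergence proof for the explicit scheme, once the three structural ingredients required by \cite{BS91} -- $L^\infty$-stability, monotonicity, and consistency -- have been re-established for the $\theta$-method. First I would rewrite \eqref{scheme::simple_parabolic_eq-theta} in the abstract form $S(h,x,t,U_h(x,t),U_h)=0$ on $\rn\times[0,T]$, in analogy with \eqref{S_definition}. The one genuinely new feature is that the current-point value $r=U_h(x,t)$ now enters both through the time-difference quotient and, via the implicit layer, through the center weight inside the argument of $F$. Concretely, the center value of
\[
\theta\sum_{\bj\in\zn\setminus\{0\}}\big(U_h(x+x_{\bj},t)-U_h(x,t)\big)\kappa_{\sigma,h,\bj}
\]
is replaced by $r$, while the off-center values at time $t$ and the entire explicit layer at time $t-\tau$ are carried by the function argument $U_h$. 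Getting this bookkeeping right is exactly what makes the subsequent monotonicity argument go through.

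With this form fixed I would check the BS hypotheses. Monotonicity in $r$ is immediate and needs no CFL: increasing $r$ increases the difference quotient and decreases the argument of $F$ by $\theta r\sum_{\bj}\kappa_{\sigma,h,\bj}$, so by \ref{F1} ($F$ nondecreasing) the term $-F(\cdot)$ increases as well. Monotonicity in $U_h$ (nonincreasing) is the delicate point, and it is here that the modified CFL \eqref{CFL_condition-theta} is used. Repeating the computation of Theorem \ref{comp_S} for a difference $U_h-V_h\ge0$ -- using $\kappa_{\sigma,h,\bj}\ge0$ (Lemma \ref{lem::discr_fraclap_quadrature}), the Lipschitz bound \ref{F1}, and the inequality $(a-b)^+\le a^+$ for $b\ge0$ -- the only ``dangerous'' contribution is the explicit center term, now weighted by $(1-\theta)$, and its coefficient reduces to $\tfrac{1}{\tau L_F}-(1-\theta)\sum_{\bj}\kappa_{\sigma,h,\bj}$; by the scaling $\sum_{\bj}\kappa_{\sigma,h,\bj}=C_\sigma h^{-\sigma}$ this is nonnegative under \eqref{CFL_condition-theta}. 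For $L^\infty$-stability I would compare with the spatially constant barriers $\pm W$ of Theorem \ref{eus}, which stay super/subsolutions because the fractional operators annihilate constants. Existence and uniqueness are no longer automatic; for each time step the implicit relation defines a map whose Lipschitz constant is controlled by the implicit weight $\theta\tau L_F\sum_{\bj}\kappa_{\sigma,h,\bj}$, so a Banach fixed-point argument followed by induction on the time level gives a unique $U_h$, as detailed for \cite[Theorem 3.1]{BJK1}. Finally, BS-consistency carries over from Lemma \ref{thm:BS-consist}: for $\phi\in C^2\cap C_b$ both discrete fractional-Laplacian layers $(-\Delta_h)^{\frac{\sigma}{2}}\phi(\cdot,t)$ and $(-\Delta_h)^{\frac{\sigma}{2}}\phi(\cdot,t-\tau)$ converge locally uniformly to $(-\Delta)^{\frac{\sigma}{2}}\phi(x_0,t_0)$, and since $(1-\theta)+\theta=1$ they recombine into the single limit inside $F$, while $\tfrac{\eta_h}{\tau}\to0$ removes the test perturbation.

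Once stability, monotonicity, and consistency are in place, the conclusion is identical to the explicit case. I would introduce the half-relaxed limits $\overline u=\limsup_{(y,s,h)\to(x,t,0^+)}U_h$ and $\underline u=\liminf_{(y,s,h)\to(x,t,0^+)}U_h$, which are finite and bounded by $L^\infty$-stability; the half-relaxed-limit machinery then shows that $\overline u$ is a bounded USC subsolution and $\underline u$ a bounded LSC supersolution of \eqref{eq::simple_parabolic_eq}. The strong comparison principle, Proposition \ref{prop::comparison_existence_stability}(i), yields $\overline u\le\underline u$, whereas $\overline u\ge\underline u$ holds by definition; hence the two coincide with the unique viscosity solution $u$ (Proposition \ref{prop::comparison_existence_stability}(ii)), and local uniform convergence $U_h\to u$ follows.

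The genuinely new work is all generated by the implicit term, and I expect the monotonicity step to be the main obstacle: one must set up $S$ so that the center value of the implicit fractional Laplacian is correctly identified with $r$, and then re-derive the nonincreasing-in-$U_h$ property with the $(1-\theta)$-weighted explicit center term, verifying that \eqref{CFL_condition-theta} is exactly the restriction that keeps the relevant coefficient nonnegative. The existence proof also requires the extra fixed-point step, whereas consistency and the final half-relaxed-limit argument need essentially no change.
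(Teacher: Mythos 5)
Your proposal is correct and takes essentially the same route as the paper: the paper's own proof of this theorem is precisely the sketch you give --- monotonicity/comparison, $L^\infty$-stability, and consistency re-derived as for the explicit scheme under the modified CFL condition \eqref{CFL_condition-theta}, existence from a Banach fixed-point argument at each time step followed by induction (as in \cite[Theorem 3.1]{BJK1}), and convergence by the half-relaxed-limit method of \cite{BS91} combined with the strong comparison principle of Proposition \ref{prop::comparison_existence_stability}. Your explicit bookkeeping (the implicit center value entering as $r$, and the dangerous explicit center term with coefficient $\tfrac{1}{\tau L_F}-(1-\theta)\sum_{\bj}\kappa_{\sigma,h,\bj}$) supplies details the paper leaves implicit, and it recovers the natural scaling $(1-\theta)\tau\leq h^{\sigma}/(L_F C_\sigma)$, indicating that the exponent $2\sigma$ in \eqref{CFL_condition-theta} and the reference to \eqref{CFL_condition:Isaacs} in the theorem statement are typos.
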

\begin{rem}
Only the fully implicit schemes $\theta=1$ have no CFL restriction here, see \eqref{CFL_condition-theta}. Here the CFL condition is a condition to be monotone/$L^\infty$-stable.
For linear (local) problems the case $\theta=\frac12$ is known as the Cranck-Nicholson scheme. This scheme is von Neumann stable without any CFL condition, but to have stronger $L^\infty$-stability a CFL condition is also needed. 
\end{rem}

\subsection{Equations with first order term}
To explain how first order terms and convection phenomena modify our schemes and analysis, we consider the equation
\begin{equation}
\begin{aligned}
    u_t -F\big(-(-\Delta)^{\frac{\sigma}{2}}u\big)+H(Du)&=f(x,t),  
\end{aligned}    
\label{eq:conv}
\end{equation}
where $H:\rn \to \R$ satisfies 
\medskip
\begin{enumerate}
\myitem{$\mathbf{(H1)}$}\label{H1} $|H(p_1) - H(p_2)| \leq L_H |p_1 -p_2|$ for  $p_1, p_2 \in \rn$.
\end{enumerate}
\medskip
Under assumptions \ref{F1}, \ref{F3}, \ref{H1}, comparison and wellposedness of \eqref{eq:conv} holds as in Proposition \ref{result:welposed-visco}. The proof remains the same since the results of \cite{CJ17} are very general and cover this case as well.

 There are many ways to discretize  the $H(Du)$-term to get a monotone numerical scheme for \eqref{eq:conv}. Such schemes are often derived from monotone conservative schemes for scalar conservation laws like e.g. up-wind, Lax-Freidrich, Gudonov or Enquist-Osher type of schemes \cite{Crandall-lions, Osher-Shu,Osher-Sethian,Bardi-Osher}. For simplicity we consider an explicit scheme based on the Lax-Friedrich discretization \cite[Section 2]{Crandall-lions}: 
\begin{equation}\label{scheme::eq_conv}
    U_{\bi}^{n+1} = U_{\bi}^n+\tau \Big[ F\big(-(-\Delta_h)^{\frac{\sigma}{2}}U_{\bi}^n\big) - H (\grad^{\text{cd}}_h U_{\bi}^{n})+  h\, \Delta^H_{h} U_{\bi}^n+f_{\bi}^n\Big],
\end{equation}
where for $k\in \{1, \cdots, n\}$,
\begin{align*}
    &\grad^{\text{cd}}_h U = \big(D^c_{h,1}U,\cdots, D^c_{h,n}U\big) \quad \text{with} \quad D^c_{h,k}U(x) =\tfrac{U(x+ he_k) - U (x -he_k)}{2h},\\
    &\Delta^H_h U = \sum_{k=1}^N L^k_{H}  \tfrac{U(x+h e_k)  -2 U(x)+ U(x-he_k)}{h^2} \quad \text{with} \quad L^k_H = \tfrac{\|\partial_k H\|_{L^{\infty}}}{2}.
 \end{align*} 

This scheme is monotone under the CFL condition
\begin{align}
    \tau \leq \frac{1}{2h^{-1} L_H+ L_F 
    C_{\sigma}h^{-\sigma}} \qquad \text{where} \qquad  L_H = \sum_{k=1}^N L^k_H,
    \label{CFL_condition:eq-conv}
\end{align}
$L_F$ as in \ref{F1}, and $C_\sigma$ as in Lemma \ref{lem::discr_fraclap_quadrature}. A simpler sufficient condition is $\tau \leq C \min\{h, h^{\sigma}\}$ for a $C$ depending on $L_H$ and $L_F$.  

Under CFL-condition \eqref{CFL_condition:eq-conv} (and \ref{F1}, \ref{F3}, \ref{H1}), comparison for \eqref{scheme::eq_conv} holds as in Theorem \ref{thm::scheme_comparison}, and then well-posedness and $L^{\infty}$-stability of \eqref{scheme::eq_conv} follow as before without change of proofs. Since $ H (\grad^{\text{cd}}_h \phi)+  h\, \Delta^H_{h} \phi \to H (\grad \phi)$ as $h\to 0$ for smooth functions $\phi$, the scheme is consistent and a version of Theorem \ref{thm:BS-consist} follows. By the method of half-relaxed limit we then get the following convergence result:
\begin{thm}
    Assume \ref{F1}, \ref{F3}, \ref{H1}, CFL condition \eqref{CFL_condition:eq-conv}, and  $u$ and $U_h$ solve \eqref{eq:conv} and \eqref{scheme::eq_conv} respectively. Then $U_h \to u$ locally uniformly to as $h \rightarrow 0$.
\end{thm}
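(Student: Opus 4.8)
The plan is to run the Barles--Perthame--Souganidis method of half-relaxed limits \cite{BS91}, exactly as in the convergence theorem for \eqref{eq::simple_parabolic_eq}, once the three structural properties --- $L^\infty$-stability, monotonicity, and consistency --- have been re-established in the presence of the convection term. First I would rewrite \eqref{scheme::eq_conv} on all of $\rn\times[0,T]$ in the abstract form $S(h,x,t,r,U_h)=0$, augmenting the definition \eqref{S_definition} by the term $H(\grad^{\text{cd}}_hU_h(x,t-\tau))-h\,\Delta^H_hU_h(x,t-\tau)$; since the CFL condition \eqref{CFL_condition:eq-conv} forces $\tau=o(1)$, the explicit $\tau$-dependence can again be suppressed. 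For $L^\infty$-stability I would use the barriers $\pm W$ with $W(x_\bj,t_n)=\|u_0\|_{L^\infty}+t_n(|F(0)|+|H(0)|+\|f\|_{L^\infty})$. Because $W$ is constant in space, both $\grad^{\text{cd}}_hW$ and $\Delta^H_hW$ vanish, so the convection part contributes only the bounded constant $-H(0)$, which is absorbed by the extra $t_n|H(0)|$ in the rate; discrete comparison then gives the bound.

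The genuinely new point is monotonicity: the map $U_\bj^n\mapsto U_\bi^{n+1}$ must be nondecreasing in every neighboring value. For the fractional-Laplacian part this is verbatim Theorem \ref{comp_S}. For the Lax--Friedrich part I would compute the coefficient of $U(x\pm he_k)$ in $-H(\grad^{\text{cd}}_hU)+h\,\Delta^H_hU$. Using \ref{H1} and the definition $L^k_H=\tfrac12\|\partial_kH\|_{L^\infty}$, these off-diagonal coefficients are bounded below by $\tfrac1h\big(L^k_H\mp\tfrac12\partial_kH\big)\ge0$, so the artificial viscosity exactly compensates the (sign-changing) central difference of $H$ and only nonnegative weights are added. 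The diagonal contribution is negative of total size $2h^{-1}L_H+L_FC_\sigma h^{-\sigma}$ once combined with the fractional part, and the CFL condition \eqref{CFL_condition:eq-conv} is precisely what keeps the coefficient of $U_\bi^n$ nonnegative. This yields the BS monotonicity of Lemma \ref{mono} and the analogue of the discrete comparison Theorem \ref{comp_S}, from which well-posedness and the stability bound follow without further change.

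For consistency, I would note that for $\phi\in C^2\cap C_b(\overline Q_T)$ central differencing gives $\grad^{\text{cd}}_h\phi\to\grad\phi$ locally uniformly, while the artificial viscosity satisfies $h\,\Delta^H_h\phi=O(h)\to0$; hence by continuity of $H$ (from \ref{H1}) one obtains $H(\grad^{\text{cd}}_h\phi)-h\,\Delta^H_h\phi\to H(\grad\phi)$. Combined with the convergence $-(-\Delta_h)^{\sigma/2}\phi\to-(-\Delta)^{\sigma/2}\phi$ already established in Lemma \ref{thm:BS-consist}, this gives the exact analogue of Lemma \ref{thm:BS-consist}, now with limiting operator $\phi_t-F(-(-\Delta)^{\sigma/2}\phi)+H(\grad\phi)-f$, including the appropriate treatment of the initial layer at $t_0=0$.

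Finally I would run the half-relaxed-limits argument as before: define the upper and lower half-relaxed limits $\overline u$ and $\underline u$, which are finite and bounded by stability (Lemma \ref{stab}); by BS monotonicity, BS consistency, and stability of viscosity solutions they are respectively a bounded sub- and supersolution of \eqref{eq:conv}. The strong comparison principle for \eqref{eq:conv}, which holds under \ref{F1}, \ref{F3}, \ref{H1} as noted above via \cite{CJ17}, forces $\overline u\le\underline u$; since trivially $\overline u\ge\underline u$, the two coincide, the common value $u$ is the unique continuous viscosity solution, and $U_h\to u$ locally uniformly (cf. \cite[Chapter 5]{Bardi-BOOK}). I expect the monotonicity step to be the main obstacle: one must check that the Lax--Friedrich artificial viscosity precisely dominates the central difference of $H$, and that the single CFL budget \eqref{CFL_condition:eq-conv} simultaneously controls the convection weights and the fractional-Laplacian weights.
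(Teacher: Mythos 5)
Your proposal is correct and takes essentially the same route as the paper, whose own ``proof'' is just the sketch preceding the theorem: monotonicity/comparison as in Theorem \ref{comp_S} under the CFL condition \eqref{CFL_condition:eq-conv}, $L^\infty$-stability via constant-in-space barriers, consistency from $H(\grad^{\text{cd}}_h \phi)+h\,\Delta^H_h\phi \to H(\grad\phi)$ for smooth $\phi$, and then the Barles--Perthame--Souganidis half-relaxed-limits argument with the strong comparison principle for \eqref{eq:conv}. In fact you supply details the paper leaves implicit, notably the off-diagonal coefficient bound $\tfrac1h\big(L^k_H\mp\tfrac12\partial_kH\big)\ge0$ showing the artificial viscosity dominates the central difference, and the $|H(0)|$ correction needed in the stability barrier.
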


\cb 
\subsection{Equations with more general diffusion terms}\label{sec:multdiff}
More complicated diffusions can be considered, like sums of terms with fractional Laplacians of different order and dimension. Many extensions are possible, here we just discuss one:
\begin{equation}
\begin{aligned}
    u_t -F_1\big(-(-\Delta_{\alpha^1})^{\frac{\sigma_1}{2}}u\big)-\dots-F_P\big(-(-\Delta_{\alpha^P})^{\frac{\sigma_P}{2}}u\big)&=f(x,t),  
\end{aligned}    
\label{eq:multdiff}
\end{equation}
where $P\in\N$, $\sigma_k\in(0,2)$ and $\Delta_{\alpha^k}=\alpha_1^k\partial_{1}^2+\dots+\alpha^k_{N}\partial_{N}^2$, $\alpha^k_j\in\{0,1\}$, for $k=1,\dots,P$ and $j=1,\dots,N$. E.g., $\Delta_{(0,1,0)}^{1/2}=(\partial_2^2)^{\frac12}$ and $\Delta_{(1,0,1)}^{1/3}=(\partial_1^2+\partial_3^2)^{\frac13}$.
Under assumptions \ref{F1} (for each $F_k$) and \ref{F3},  comparison and wellposedness of \eqref{eq:multdiff} holds as in Proposition \ref{result:welposed-visco} (the results of \cite{CJ17} still applies).

We discretise as before by replacing fractional Laplacians by powers of discrete Laplacians and using forward Euler in time:
\begin{equation}\label{scheme::eq_multdiff}
    U_{\bi}^{n+1} = U_{\bi}^n+\tau \Big[ F_1\big(-(-\Delta_{\alpha^1,h})^{\frac{\sigma_1}{2}}U_{\bi}^n\big) + \dots+F_P\big(-(-\Delta_{\alpha^P,h})^{\frac{\sigma_P}{2}}U_{\bi}^n\big)+f_{\bi}^n\Big],
\end{equation}
This scheme is monotone under a CFL condition, e.g.
\begin{align}
\tau \leq  \min_{k=1,\dots,P}\frac{h^{\sigma_k}}{L_{F_k}C_{\sigma,k}}
    \label{CFL_condition:eq-multdiff}
\end{align}
where $L_{F_k}$ is defined in \ref{F1}, and $C_{\sigma,k}$ is as in Lemma \ref{lem::discr_fraclap_quadrature}. 
Under CFL-condition \eqref{CFL_condition:eq-multdiff} (and \ref{F1}, \ref{F3}), comparison for \eqref{scheme::eq_multdiff} holds as in Theorem \ref{thm::scheme_comparison}, and then well-posedness and $L^{\infty}$-stability of \eqref{scheme::eq_conv} follow as before without change of proofs. The scheme is consistent and a version of Theorem \ref{thm:BS-consist} follows. By the method of half-relaxed limit we then get the following convergence result:
\begin{thm}
    Assume \ref{F1}, \ref{F3},  CFL condition \eqref{CFL_condition:eq-multdiff}, and  $u$ and $U_h$ solve \eqref{eq:multdiff} and \eqref{scheme::eq_multdiff} respectively. Then $U_h \to u$ locally uniformly to as $h \rightarrow 0$.
\end{thm}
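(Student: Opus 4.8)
The plan is to follow exactly the template already established for the scalar problem \eqref{eq::simple_parabolic_eq}, since the multi-diffusion equation \eqref{eq:multdiff} is a finite sum of structurally identical terms. First I would recast the scheme \eqref{scheme::eq_multdiff} in the abstract form \eqref{eq::scheme_abstract_form}, defining an operator $S$ analogous to \eqref{S_definition} but with the single term $F\big(-(-\Delta_h)^{\frac{\sigma}{2}}\cdot\big)$ replaced by $\sum_{k=1}^P F_k\big(-(-\Delta_{\alpha^k,h})^{\frac{\sigma_k}{2}}\cdot\big)$. The verification then reduces to checking the three Barles--Souganidis ingredients -- $L^\infty$-stability, monotonicity, and consistency -- after which convergence follows verbatim from the method of half-relaxed limits, invoking the comparison principle for \eqref{eq:multdiff} that holds by Proposition \ref{prop::comparison_existence_stability} (as asserted in the text, the results of \cite{CJ17} still apply here).

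The key observation making each ingredient go through is that Lemma \ref{lem::discr_fraclap_quadrature} applies separately to each operator $-(-\Delta_{\alpha^k,h})^{\frac{\sigma_k}{2}}$: each admits a nonnegative-weight quadrature representation with total weight $C_{\sigma,k}/h^{\sigma_k}$ (the lower-dimensional Laplacians $\Delta_{\alpha^k}$ are handled by applying the one-dimensional or reduced-dimension version of the lemma in the active coordinate directions). For monotonicity in the sense of Lemma \ref{mono}, I would redo the comparison computation of Theorem \ref{comp_S}: each $F_k$ contributes a term bounded using \ref{F1} and the nonnegativity of its weights, and the diagonal coefficient is controlled by requiring $\tau\sum_{k=1}^P L_{F_k}C_{\sigma,k}h^{-\sigma_k}\le 1$, which is implied by the stated CFL condition \eqref{CFL_condition:eq-multdiff} since $\tau\le h^{\sigma_k}/(L_{F_k}C_{\sigma,k})$ for each $k$ gives $\tau\,L_{F_k}C_{\sigma,k}h^{-\sigma_k}\le 1$ termwise, and one sums. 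Stability (an analogue of Theorem \ref{eus}) then follows by inserting the barrier $W(x_\bj,t_n)=\|u_0\|_{L^\infty}+t_n\big(\sum_k|F_k(0)|+\|f\|_{L^\infty}\big)$ and applying discrete comparison. Consistency is immediate from Lemma \ref{lem:trun_err_fraclap} applied to each term, since for $\phi\in C^2\cap C_b$ one has $(-\Delta_{\alpha^k,h})^{\frac{\sigma_k}{2}}\phi\to(-\Delta_{\alpha^k})^{\frac{\sigma_k}{2}}\phi$ locally uniformly and each $F_k$ is continuous, so the proof of Lemma \ref{thm:BS-consist} carries over with the single $F$ replaced by the finite sum.

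With these three lemmas in hand, the convergence proof is a direct copy of the convergence theorem for \eqref{eq::simple_parabolic_eq}: define the half-relaxed limits $\overline u$ and $\underline u$ of $U_h$, use stability to see they are finite, use monotonicity and consistency together with stability of viscosity solutions to conclude $\overline u$ and $\underline u$ are respectively a subsolution and supersolution of \eqref{eq:multdiff}, apply the comparison principle to get $\overline u\le\underline u$, and conclude local uniform convergence since $\overline u=\underline u=:u$ is the unique viscosity solution. I do not expect any genuinely hard step; the only point requiring care is confirming that Lemma \ref{lem::discr_fraclap_quadrature} and its total-weight formula survive intact for the degenerate directional Laplacians $\Delta_{\alpha^k}$ where some $\alpha^k_j=0$, so that the summability and nonnegativity of weights -- on which both monotonicity and the CFL bookkeeping rest -- remain valid. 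Once the per-term CFL contributions are summed correctly, everything else is routine repetition of the arguments already given.
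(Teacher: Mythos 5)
Your overall strategy is exactly the paper's: the paper only sketches this result, asserting that comparison, $L^\infty$-stability, and consistency ``follow as before'' and then invoking half-relaxed limits, which is precisely the template you lay out. Your treatment of the degenerate directional operators (applying Lemma \ref{lem::discr_fraclap_quadrature} in the active coordinates, so that nonnegativity and summability of the weights survive) is also correct and is the right point to flag.

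There is, however, a genuine gap at the step you yourself call the key one. You correctly identify that the comparison/monotonicity argument of Theorem \ref{comp_S} requires the diagonal coefficient to be nonnegative, i.e.
\begin{align*}
\tau\sum_{k=1}^P L_{F_k}\,C_{\sigma,k}\,h^{-\sigma_k}\;\le\;1,
\end{align*}
but your claim that this follows from \eqref{CFL_condition:eq-multdiff} ``termwise, and one sums'' is false: the termwise bounds give $\tau L_{F_k}C_{\sigma,k}h^{-\sigma_k}\le 1$ for each $k$, and summing them yields a bound of $P$, not $1$. This is not a repairable slip in wording, because under the min-form condition \eqref{CFL_condition:eq-multdiff} monotonicity and comparison genuinely fail. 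Take $P=2$, $F_1=F_2=\mathrm{id}$ (so $L_{F_k}=1$), $\sigma_1=\sigma_2=\sigma$, $\alpha^1=\alpha^2=(1,\dots,1)$, $f=0$, and $\tau=h^\sigma/C_\sigma$, which \eqref{CFL_condition:eq-multdiff} allows. The update becomes $U^{n+1}_{\bi}=U^n_{\bi}+2\tau\big(-(-\Delta_h)^{\frac\sigma2}\big)U^n_{\bi}$, whose coefficient in front of $U^n_{\bi}$ is $1-2\tau C_\sigma h^{-\sigma}=-1$; starting from $U^0=-\mathbf{1}_{\{\bi_0\}}\le 0=V^0$ one computes $U^1_{\bi_0}=-1+2\tau C_\sigma h^{-\sigma}=1>0=V^1_{\bi_0}$, so comparison fails (and the scheme is von Neumann unstable, so convergence itself is in jeopardy, not merely the proof). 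The correct hypothesis is the ``harmonic'' form $\tau\le\big(\sum_{k=1}^P L_{F_k}C_{\sigma,k}h^{-\sigma_k}\big)^{-1}$ --- the exact analogue of \eqref{CFL_condition:eq-conv} --- or equivalently the min-form with an extra factor $\frac1P$. Note that this defect is inherited from the paper: condition \eqref{CFL_condition:eq-multdiff} as printed is too weak, and your (correct) identification of the needed inequality exposes this; with the strengthened CFL condition substituted, the rest of your argument (stability barrier, termwise consistency, half-relaxed limits) goes through as written.
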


\nc

\subsection{More general equations }
Here we discuss how to extend our results to the nonlocal HJB/Isaacs equations \eqref{eqn:main}. In view of e.g. \cite{CJ17,JK05}\footnote{The remarks of footnote 1 page 4 still apply.} sufficient conditions for strong comparison and well-posedness are given by 
\medskip
\begin{enumerate}
\myitem{$\mathbf{(I1)}$}\label{I1} 
 $c^{\A,\B}, a^{\A,\B}, b^{\A,\B}, f^{\A,\B}$ are continuous in $\A, \B, t, x$, and $a^{\A,\B}, c^{\A,\B} \geq 0$, and 
 $$\sup_{\A,\B} \Big\{ \|(a^{\A,\B})^{\frac1\sigma}\|_{W^{1,\infty}}, \|b^{\A,\B}\|_{W^{1,\infty}}, 
 \|c^{\A,\B}\|_{L^{\infty}},
\|f^{\A,\B}\|_{L^{\infty}}\Big\} \leq K. $$
 \end{enumerate}
 The assumption on $a$ is best understood by looking at section \ref{sec:DG}. Assumption \ref{I1} implies that the coefficients of the underlying SDE \eqref{SDE} are Lipschitz.
 
Consider now the following explicit scheme for \eqref{eqn:main},  
\begin{align}
    \label{scheme:isaacs}
    U_{\bi}^{n+1} = U_{\bi}^n + \tau \inf_{\B\in\mathcal{B}}\sup_{\A\in\mathcal{A}} \big\{ f^{\alpha, \beta} (x_{\bi}, t_n) - c^{\alpha, \beta} (x_{\bi}, t_n)  U^n_{\bi} + \LL_{h}^{\alpha, \beta} U_{\bi}^n   \big\}, 
\end{align}
where
\begin{align*}
\LL_{h}^{\alpha, \beta} U_{\bi}^n =&  - a^{\alpha, \beta} (x_{\bi}, t_n) (- \Delta_h)^\frac{\sigma}{2} U_{\bi}^n  \\ 
 & \qquad + \sum_{k=1}^N \Big(b^{\alpha, \beta, +}_k (x_{\bi}, t_n) D_{h,k}^+ U_{\bi}^n + b^{\alpha, \beta, -}_k (x_{\bi}, t_n) D_{h,k}^- U_{\bi}^n \Big) , 
\end{align*}
and $D_{h,k}^{\pm} \phi(x) = \frac{\phi(x\pm h e_k) -\phi(x)}{h}$ and $b^{\alpha, \beta} = (b^{\alpha, \beta}_1, \cdots, b^{\alpha, \beta}_N)$. Here we have used an upwind approximation of the gradient term.

The scheme is monotone if for all gridpoints $(x_{\bi}, t_n)$,
\begin{align*}
    \tau \leq \frac{1}{h^{-1}\sum_{k=1}^N |b^{\alpha, \beta}_k(x_{\bi}, t_n)|+ a^{\alpha, \beta} (x_{\bi}, t_n) C_{\sigma} h^{-\sigma} + c^{\alpha, \beta} (x_{\bi}, t_n)}. 
\end{align*}
By \ref{I1}, a sufficient CFL condition is then given by
\begin{align}
\label{CFL_condition:Isaacs}
    \tau \leq \frac{1}{K \big(Nh^{-1} + C_{\sigma} h^{-\sigma} + 1\big)}, \qquad \text{where} \  K \ \text{is given by \ref{I1}}.
\end{align} 
 Under CFL condition  \eqref{CFL_condition:Isaacs} (and \ref{I1}, \ref{F3}),  it is straight forward to show comparison of the scheme (c.f. Theorem \ref{comp_S}). $L^{\infty}$-stability then follows by taking $\pm \big(\|u_0\|_{L^{\infty}} + t_n \sup_{\A, \B}\|f^{\A,\B}\|_{L^{\infty}}\big)$ as sub- and supersolutions. Again it is easy to verify that $\LL_{h}^{\alpha, \beta} \phi \to \LL^{\alpha, \beta} \phi$ as $h\to 0$ for any smooth bounded function $\phi$, and consistency a la Theorem \ref{thm:BS-consist} follow by similar arguments. By the half-relaxed limit method we then again have a convergence result.
\begin{thm}
    Assume \ref{F3}, \ref{I1}, CFL condition \eqref{CFL_condition:Isaacs}, and  $u$ and $U_h$ solve \eqref{eqn:main} and  \eqref{scheme:isaacs} respectively. Then $U_h\to u$ locally uniformly as $h \rightarrow 0$.
\end{thm}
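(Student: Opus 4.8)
The plan is to follow verbatim the half-relaxed-limits program already carried out for the convergence theorem of the simpler equation \eqref{eq::simple_parabolic_eq}. First I would extend the scheme \eqref{scheme:isaacs} to all of $\rn\times[0,T]$ and rewrite it in the abstract form $S(h,x,t,U_h(x,t),U_h)=0$ exactly as in \eqref{eq::scheme_abstract_form}--\eqref{S_definition}, but with the full Isaacs expression $\inf_{\B}\sup_{\A}\{f^{\A,\B}-c^{\A,\B}r+\LL_h^{\A,\B}U_h\}$ replacing $F(-(-\Delta_h)^{\sigma/2}U_h)$. With this done, convergence reduces to checking the three Barles--Perthame--Souganidis ingredients: $L^\infty$-stability, monotonicity, and consistency, plus a strong comparison principle for \eqref{eqn:main}.

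For $L^\infty$-stability I would use, as indicated in the text, that $\pm\big(\|u_0\|_{L^\infty}+t_n\sup_{\A,\B}\|f^{\A,\B}\|_{L^\infty}\big)$ are super/subsolutions, so the scheme's comparison principle (the Isaacs analogue of Theorem \ref{comp_S}) bounds $U_h$ uniformly in $h$, guaranteeing finiteness of the half-relaxed limits. For monotonicity I would verify that under \eqref{CFL_condition:Isaacs} the map $S$ is nondecreasing in $r$ and nonincreasing in the remaining arguments of $U_h$, as in Lemma \ref{mono}. The one new feature relative to the scalar case is the upwind gradient term: since $b^{\A,\B,+}_k\ge0$ multiplies $D^+_{h,k}$ and $b^{\A,\B,-}_k\le0$ multiplies $D^-_{h,k}$, every off-diagonal coefficient of $U_{\bi\pm he_k}$ is nonnegative, the nonlocal part is handled by $\kappa_{\sigma,h,\bj}\ge0$ from Lemma \ref{lem::discr_fraclap_quadrature}, and the diagonal coefficient stays nonnegative precisely because \eqref{CFL_condition:Isaacs} bundles together $h^{-1}\sum_k|b^{\A,\B}_k|$, $C_\sigma a^{\A,\B}h^{-\sigma}$, and $c^{\A,\B}$. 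Since $\inf_\B\sup_\A$ preserves the order of its arguments, monotonicity survives the inf-sup.

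For consistency I would establish the $\liminf$/$\limsup$ inequalities in the spirit of Lemma \ref{thm:BS-consist}, using that $(-\Delta_h)^{\sigma/2}\phi\to(-\Delta)^{\sigma/2}\phi$ locally uniformly and $D^\pm_{h,k}\phi\to\partial_{x_k}\phi$ for smooth bounded $\phi$, so that $\LL_h^{\A,\B}\phi\to\LL^{\A,\B}\phi$. The new subtlety is commuting these limits with $\inf_\B\sup_\A$: this is legitimate because $\mathcal A,\mathcal B$ are compact and the coefficients are continuous in $(\A,\B)$ by \ref{I1}, which makes the convergence uniform in $(\A,\B)$, and the inf-sup of a uniformly convergent family converges to the inf-sup of the limit. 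With stability, monotonicity, and consistency in hand, I would form the half-relaxed limits $\overline u,\underline u$, conclude by the standard machinery that they are a bounded sub- and supersolution of \eqref{eqn:main}, invoke the strong comparison principle for \eqref{eqn:main} (valid under \ref{I1} by \cite{CJ17}, as noted in the text) to get $\overline u\le\underline u$, and combine with the trivial $\overline u\ge\underline u$ to obtain that the common value is the unique viscosity solution $u$ and that $U_h\to u$ locally uniformly.

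The main obstacle, compared with the scalar result already proved, is the genuinely new bookkeeping around $\inf_\B\sup_\A$ in both the monotonicity and consistency steps: ensuring that the CFL condition \eqref{CFL_condition:Isaacs} is uniform over $(\A,\B)$ and over all gridpoints (which \ref{I1} secures) and that the limiting operations commute with the inf-sup. The deepest analytical input, strong comparison for the full nonlocal Isaacs equation \eqref{eqn:main}, is not proved here but imported from \cite{CJ17}.
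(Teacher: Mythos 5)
Your proposal is correct and follows essentially the same route as the paper, whose own argument is exactly this sketch: comparison for the scheme \eqref{scheme:isaacs} as in Theorem \ref{comp_S}, $L^\infty$-stability via the sub-/supersolutions $\pm\big(\|u_0\|_{L^\infty}+t_n\sup_{\A,\B}\|f^{\A,\B}\|_{L^\infty}\big)$, consistency from $\LL_h^{\A,\B}\phi\to\LL^{\A,\B}\phi$ for smooth bounded $\phi$, and the Barles--Perthame--Souganidis half-relaxed limits combined with strong comparison for \eqref{eqn:main} imported from \cite{CJ17,JK05} under \ref{I1}. One bookkeeping caveat: since the paper defines $D^{-}_{h,k}\phi(x)=\frac{\phi(x-he_k)-\phi(x)}{h}$, monotonicity requires $b^{\A,\B,-}_k\geq 0$ (writing $b=b^{+}-b^{-}$ with both parts nonnegative), not $b^{\A,\B,-}_k\leq 0$ as you state -- your conclusion that all off-diagonal coefficients are nonnegative is right, but it holds under the standard backward-difference convention rather than the paper's sign conventions.
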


\begin{rem}
    Our results also apply when  $\sigma$ depends on $\A,\B$. In this case the differential operator in equation \eqref{eqn:main} is 
    $$\LL^{\alpha, \beta} \phi = -a^{\A,\B}(x,t)(-\Delta)^{\frac12\sigma^{\A,\B}}\phi(x) + b^{\A,\B}(x,t)\cdot D\phi(x),$$
    for $\sigma^{\A,\B} \in (0,\bar\sigma]$ and $\bar\sigma<2$. 
    In this case the CFL condition depends on the maximal order $\bar \sigma$ of the fractional Laplacian operators, $\tau \leq \frac{1}{K(Nh^{-1} + C_{\bar \sigma}h^{-\bar \sigma} +1)}$.   
\end{rem}
\begin{rem} Another extension is to consider equations involving powers of more general 2nd order elliptic differential operators $L$. Such powers are defined form formula \eqref{fraclap} by replacing $\Delta$ with $L$, and the idea is to approximate by replacing $L$ by a finite difference approximation $L_h$. Such methods could be analysed in a similar way as we do here.
\end{rem}






\section*{Declarations}
\subsection*{Ethical approval}\addtocontents{toc}{\SkipTocEntry}
Not applicable.

\subsection*{Competing interests}\addtocontents{toc}{\SkipTocEntry}
The authors declare that they have no competing interests or other interests that might be perceived to influence the results and/or discussion reported in this paper.

\subsection*{Authors' contributions}\addtocontents{toc}{\SkipTocEntry}
R.Ø.L. did the numerical simulations. All authors contributed equally to the rest of the paper.

\subsection*{Funding}\addtocontents{toc}{\SkipTocEntry}
I.C. was supported by DST-India funded INSPIRE faculty fellowship (IFA22-MA187). E.R.J. and R.Ø.L. received funding from the Research Council of Norway under Grant Agreement No. 325114 “IMod. Partial differential equations, statistics and data: An interdisciplinary approach to data-based modelling”.

\subsection*{Availability of data and materials}\addtocontents{toc}{\SkipTocEntry}
All the data have been generated by numerical simulations coded in Python. This code can be shared by R.Ø.L. upon request.


\begin{thebibliography}{}


\bibitem{Ap:Book}
D. ~Applebaum.
\newblock L\'{e}vy processes and stochastic calculus.
\newblock {\em Cambridge University Press, Cambridge},  116 (2) (2009).



\bibitem{Bardi-BOOK}
M. Bardi and I. Capuzzo Dolcetta.
\newblock Optimal control and viscosity solutions of Hamilton-Jacobi-Bellman equations.
\newblock {\em Birkauser,} 1996.


\bibitem{Bardi-Osher}
M. Bardi and S. Osher.
\newblock The nonconvex multidimensional Riemann problem for Hamilton-Jacobi equations. 
\newblock {\em SIAM J. Math. Anal.}, 22 (2) (1991), 344--351.


\bibitem{Ba97}
Barles, G.
\newblock {\em Solutions de viscosité des équations de Hamilton-Jacobi}
\newblock Springer-Verlag, Paris, 1994.

\bibitem{BI08}
G. ~Barles and C. ~Imbert.
\newblock Second-order elliptic integro-differential equations: viscosity
   solutions' theory revisited.
\newblock {\em Ann. Inst. H. Poincar\'{e} C Anal. Non Lin\'{e}aire}, 25 (3) (2008),  567--585.





\bibitem{BS91}
G. ~Barles and P. E. ~Souganidis.
\newblock Convergence of approximation schemes for fully nonlinear
second order equations.
\newblock {\em Asymptotic Anal.},  4 (1991),  271--283.

\bibitem{Be:book}
R. Bellman.
\newblock {\em Dynamic programming.}
\newblock Reprint of the 1957 edition.
Princeton, 2010.

\bibitem{Bi}
I. H. Biswas.
\newblock
On zero-sum stochastic differential games with jump-diffusion driven state: a viscosity solution framework.
\newblock {\em SIAM J. Control Optim.} 50(4):1823–1858, 2012.

\bibitem{BCJ1}
I.  H. ~Biswas, I. ~Chowdhury and E.~R.  Jakobsen.
\newblock On the rate of convergence for monotone numerical schemes for nonlocal Isaacs’ equations. 
\newblock{ \em SIAM J. Numer. Anal.},  57 (2) (2019),  799--827. 



\bibitem{BJK2}
I.H. ~Biswas, E.R. ~Jakobsen and K.H. ~Karlsen.
\newblock Error estimates for a class of finite difference-quadrature schemes for fully nonlinear degenerate parabolic integro-PDEs.
\newblock{\em  J. Hyperbolic Differ. Equ.},  5 (1) (2008),  187--219. 

\bibitem{BJK1}
I.H. ~Biswas, E.R. ~Jakobsen and K.H. ~Karlsen.
\newblock Difference-quadrature schemes for nonlinear degenerate parabolic integro-PDE. 
\newblock{\em SIAM J. Numer. Anal.},  48 (3) (2010),  1110--1135. 


\bibitem{BP15} A. ~Bonito and J. E.~ Pasciak.
\newblock Numerical Approximation of fractional powers of elliptic operators.
\newblock 
{\em Math. Comp.},  84 (295) (2015),  2083--2110. 

\bibitem{BZ03}
J. F. Bonnans and H. Zidani.
\newblock Consistency of generalized finite difference schemes for the stochastic HJB equation.
\newblock {\em SIAM J. Numer. Anal.} 41(3) (2003), 1008--1021. 


\bibitem{BT2004}
B. Bouchard, and N. Touzi.
\newblock Discrete-time approximation and monte-carlo simulation of backward
stochastic differential equations.
\newblock {\em Stoch. Process Their Appl.} 111 (2) (2004), 175--206.

\bibitem{BH:2001} 
M. Boulbrachene and M. Haiour.
\newblock The finite element approximation of Hamilton-Jacobi-Bellman equations.
\newblock {\em Comput. Math. Appl.} 41 (7-8) (2001), 993--1007.




 \bibitem{Camilli-Falcone}
 {F.~Camilli and M. ~Falcone}.
 \newblock 
 An approximation scheme for the optimal control of
 diffusion processes,
 \newblock {\it RAIRO Mod\'el. Math. Anal.  Num\'er},  29 (1) (1995), 97--122.

\bibitem{C-Jakobsen2009}
{F. ~Camilli and E. R. ~Jakobsen}.
\newblock 
A finite element like scheme for integro-partial differential
Hamilton-Jacobi-Bellman equations,
\newblock {\it SIAM J. Numer. Anal.},  47 (2009),  2407--2431.





\bibitem{CJ17}
{E. ~Chasseigne and E. R. ~Jakobsen}.
\newblock 
On nonlocal quasilinear equations and their local limits,
\newblock {\it J. Differential Equations},  262 (6) (2017),  3759--3804.

\bibitem{CJ23} I. Chowdhury and E. R. Jakobsen.
\newblock Precise Error Bounds for Numerical Approximations of Fractional HJB Equations. 
\newblock {\it IMA J Numer Anal.} (2024), 1--40 (https://doi.org/10.1093/imanum/drae030).





\bibitem{Ciaurri-Stinga-02}
{O. ~Ciaurri, L. ~Roncal, P. R. ~Stinga, J. L. ~Torrea, and J. L. ~Varona}.
\newblock 
Nonlocal
discrete diffusion equations and the fractional discrete Laplacian, regularity and applications.
\newblock {\it  Adv. Math.},  330 (2018),  688--738.


\bibitem{ciaurri2015fractional}
{O. Ciaurri and L. Roncal and P. R. Stinga and J. L. Torrea and J. L. Varona}
\newblock
Fractional discrete Laplacian versus discretized fractional Laplacian, (2023
\newblock {\it Preprint, arXiv:1507.04986}.

\bibitem{Coc-Risebro16}
{G. M. Coclite, O. Reichmann, and N. H. Risebro}.
\newblock 
A convergent difference scheme for a
class of partial integro-differential equations modeling pricing under uncertainty, 
\newblock {\it SIAM J.
Numer. Anal.},   ~54 (2016),  588--605.

\bibitem{CT:Book}
{R. ~Cont and P. ~Tankov}.
\newblock 
Financial modelling with jump processes.
\newblock {\it  Chapman \& Hall/CRC Financial Mathematics Series. Chapman \& Hall/CRC, Boca Raton, FL,} (2004),  xvi+535 pp. 

\bibitem{Cont-2005}
{R. ~Cont and E. ~Voltchkova}.
\newblock 
A Finite Difference Scheme for Option Pricing in Jump Diffusion and Exponential
L\'evy Models.
\newblock {\it  SIAM J. Numer. Anal.} 43 (4) (2005), 1596--1626.
 
\bibitem{Crandall-lions}
{M.  G. ~Crandall and P.  L. ~Lions}.
\newblock 
Two approximations of solutions of Hamilton-Jacobi equations,
\newblock {\it  Math. Comp. },  43 (1984),  1--19.

\bibitem{CDG20} N. Cusimano, F. del Teso, and L. Gerardo-Giorda. \newblock Numerical approximations for fractional elliptic equations via the method of semigroups.  \newblock {\em ESAIM Math. Model. Numer. Anal.} 54(3):751–774, 2020.

\bibitem{Deb_Jak:13}
 K. ~Debrabant  and E.R. ~Jakobsen.
\newblock Semi-Lagrangian schemes for linear and fully non-linear diffusion equations. 
\newblock{ \it Math. Comp.},  82 (283)(2013),  1433--1462.

\bibitem{DJ24} 
F. del Teso and E.R. Jakobsen.
\newblock A convergent finite difference-quadrature scheme for the porous medium equation with nonlocal pressure.
\newblock Preprint: arxiv:2303.05168, 2023.









\bibitem{EJT18b}
F. del Teso, J. ~Endal, and E.R.~Jakobsen.
\newblock Robust numerical methods for nonlocal (and local) equations of porous medium type. Part II: Schemes and experiments.
\newblock{\em SIAM J. Numer. Anal.},  56 (6) (2018),  3611--3647. 

\bibitem{Roxana-Reisinger2021}
{ R. Dumitrescu, C. Reisinger,  Y. Zhang}.
\newblock 
Approximation schemes for mixed optimal stopping and control problems with nonlinear expectations and jumps.
\newblock {\it Appl. Math. Optim.} 83 (3) (2021), 1387--1429.



\bibitem{FF:book}
M. Falcone and R. Ferretti.
\newblock {\em Semi-Lagrangian approximation schemes for linear and Hamilton-Jacobi equations.}
\newblock Society for Industrial and Applied Mathematics (SIAM), 2014.

\bibitem{FS:book} 
W. H. Fleming and H. M. Soner.
\newblock {\em Controlled {M}arkov processes and viscosity solutions},
\newblock  Springer, New York, 2006.



\bibitem{EHJ2017}
W. E, J. Han, and A. Jentzen.
\newblock Solving high-dimensional partial differential equations using deep learning.
\newblock {\em Proc. Natl. Acad. Sci. USA} 115(34) (2018), 8505--8510.

\bibitem{Ha:book}
F. L. Hanson.
\newblock {\em Applied Stochastic Processes and Control for Jump-Diffusions: Modeling, Analysis and Computation.}
\newblock Society for Industrial and Applied Mathematics (SIAM), 2007.


\bibitem{Ishii-R-21}
H.~Ishii and A~Roch.
\newblock Existence and uniqueness of viscosity solutions of an
   integro-differential equation arising in option pricing.
\newblock{\em SIAM J. Financial Math.} 12 (2)(2021), 604--640.  




\bibitem{JK05}
E. R. ~Jakobsen and K. H. ~Karlsen.
\newblock Continuous dependence estimates for viscosity solutions of integro-PDEs.
\newblock {\it J. Differential Equations},  212 (2) (2005),  278--318. 


\bibitem{JK06}
E. R. ~Jakobsen and K. H. ~Karlsen.
\newblock A ``maximum principle for semicontinuous functions"
applicable to integro-partial differential equations. 
\newblock {\it NoDEA Nonlinear Differential Equations
Appl.},  13 (2006),  137--165.






\bibitem{JKL08}
E. R. ~Jakobsen,  K. H. ~Karlsen  and C.~ La Chioma.
\newblock Error estimates for approximate solutions to Bellman
equations associated with controlled jump-diffusions. 
\newblock {\it Numer. Math.},  110 (2) (2008),  221--255. 










\bibitem{DK:book} 
H. J. Kushner and P. Dupuis.
\newblock {\em Numerical methods for stochastic control problems in continuous time.}
\newblock Springer, 2001.

\bibitem{Kw}
Kwasnicki, M.
\newblock Ten equivalent definitions of the fractional Laplace operator.
\newblock {\em Fract. Calc. Appl. Anal.} 20(1):7–51, 2017. 


\bibitem{Le00}
O. Lepsky.
\newblock Spectral Viscosity Approximations to Hamilton--Jacobi Solutions.
\newblock {\em SIAM J. Numer. Anal.} 38 (5) (2000), 1439--1453. 


\bibitem{Osher-Sethian}
S. Osher and J. A. Sethian.
\newblock Fronts propagating with curvature-dependent speed: algorithms based on Hamilton-Jacobi formulations.
\newblock {\em J. Comput. Phys.} 79 (1) (1988), no.1, 12--49.

\bibitem{Osher-Shu}
S. Osher and C. W. Shu.
\newblock High-order essentially nonoscillatory schemes for Hamilton-Jacobi equations.
\newblock {\em SIAM J. Numer. Anal.} 28 (4) (1991), 907--922. 


\bibitem{OS:book}
B. Øksendal and A. Sulem.
\newblock {\em Applied stochastic control of jump diffusions.}
\newblock Springer, Cham, 2019.

\bibitem{Reisinger2021}
{ C. Reisinger,  Y. Zhang}.
\newblock 
A penalty scheme and policy iteration for nonlocal HJB variational inequalities with monotone nonlinearities.
\newblock {\it Comput. Math. Appl.} 93 (2021), 199--213.



\bibitem{RW:2023}
X. Ros-Oton and M. Weidner.
\newblock
Obstacle problems for nonlocal operators with singular kernels.
\newblock Preprint arXiv:2308.01695.





\bibitem{SS:2013}
I. Smears and E. Süli.
\newblock Discontinuous Galerkin finite element approximation of nondivergence form elliptic equations with Cord\`es coefficients.
\newblock {\em SIAM J. Numer. Anal.} 51 (4) (2013), 2088--2106.

\bibitem{Souga}
{P. E. ~Souganidis}.
\newblock 
Approximation schemes for viscosity solutions of Hamilton-Jacobi equations,
\newblock {\it J.
Differential Equations},  59 (1) (1985), ~1--43.







\bibitem{ulrik_master_thesis}
{U. Unneberg}.
\newblock
A Numerical Method for Fractional Mean Field Games, Master thesis
\newblock {\it NTNU Open, DOI: 11250/3143776}, (2024).



\end{thebibliography}
\end{document}